\tikzstyle{vertex}=[circle, draw, inner sep=2pt, minimum size=6pt]
\providecommand{\keywords}[1]{
  \small	
  \textbf{\textit{Keywords---}} #1
}
\newtheorem{lemma}{Lemma}[section]
\newtheorem{corollary}{Corollary}[section]
\newtheorem{proposition}{Proposition}[section]
\newtheorem{example}{Example}[section]
\newcommand{\nc}{\newcommand}
\newcommand{\Yildirim}{Y{\i}ld{\i}r{\i}m}
\nc{\cA}{{\cal A}}
\nc{\cB}{{\cal B}}
\nc{\cC}{{\cal C}}
\nc{\cD}{{\cal D}}
\nc{\cE}{{\cal E}}
\nc{\cG}{{\cal G}}
\nc{\cF}{{\cal F}}
\nc{\cH}{{\cal H}}
\nc{\cI}{{\cal I}}
\nc{\cK}{{\cal K}}
\nc{\cL}{{\cal L}}
\nc{\cM}{{\cal M}}
\nc{\cN}{{\cal N}}
\nc{\cO}{{\cal O}}
\nc{\cP}{{\cal P}}
\nc{\cQ}{{\cal Q}}
\nc{\cR}{{\cal R}}
\nc{\cS}{{\cal S}}
\nc{\cT}{{\cal T}}
\nc{\cV}{{\cal V}}
\nc{\tx}{{\tilde x}}
\nc{\la}{{\langle}}
\nc{\ra}{{\rangle}}
\nc{\ts}{\textsuperscript}
\def\R{\mathbb{R}}
\title{On Exact and Inexact RLT and SDP-RLT Relaxations of Quadratic Programs with Box Constraints}
\author{Yuzhou Qiu\thanks{School of Mathematics, Peter Guthrie Tait Road, The University of Edinburgh, Edinburgh, EH9 3FD, United Kingdom. E-mail: \tt{y.qiu-16@sms.ed.ac.uk}} \and E. Alper \Yildirim\thanks{School of Mathematics, Peter Guthrie Tait Road, The University of Edinburgh, Edinburgh, EH9 3FD, United Kingdom. ORCID ID: 0000-0003-4141-3189 E-mail: \tt{E.A.Yildirim@ed.ac.uk}}}
\date{1 March 2023}
\begin{document}

\maketitle
\begin{abstract}
Quadratic programs with box constraints involve minimizing a possibly nonconvex quadratic function subject to lower and upper bounds on each variable. This is a well-known NP-hard problem that frequently arises in various applications. We focus on two convex relaxations, namely the RLT (Reformulation-Linearization Technique) relaxation and the SDP-RLT relaxation obtained by adding semidefinite constraints to the RLT relaxation. Both relaxations yield lower bounds on the optimal value of a quadratic program with box constraints. We present complete algebraic descriptions of the set of instances that admit exact RLT relaxations as well as those that admit exact SDP-RLT relaxations. We show that our 
descriptions can be converted into algorithms for efficiently constructing instances with exact or inexact relaxations.
\end{abstract}

\keywords{Quadratic programming with box constraints, Reformulation linearization technique, Semidefinite relaxation, Convex underestimator}

{\bf AMS Subject Classification:} 90C20, 90C22, 90C26

\section{Introduction} \label{intro}

A quadratic program with box constraints is an optimization problem in which a possibly nonconvex quadratic function is minimized subject to lower and upper bounds on each variable:
\[
\textrm{(BoxQP)} \quad \ell^* = \min\limits_{x \in \R^n} \left\{q(x): x \in F\right\},
\]
where 
$q: \R^n \to \R$ and $F \subseteq \R^n$ are respectively given by
\[
q(x) = \textstyle \frac{1}{2} x^T Q x + c^T x, \quad F = \left\{x \in \R^n: 0 \leq x \leq e \right\}.
\]
Here, $e \in \R^n$ denotes the vector of all ones. The parameters of the problem are given by the pair $(Q,c) \in \cS^n \times \R^n$, where $\cS^n$ denotes the set of $n \times n$ real symmetric matrices. The optimal value is denoted by $\ell^* \in \R$. Note that any quadratic program with finite lower and upper bounds on each variable can be easily transformed into the above form.  

(BoxQP) is regarded as a ``fundamental problem'' in global optimization that appears in a multitude of applications (see, e.g., \cite{angelis1997quadratic,HPT2000}). If $Q$ is a positive semidefinite matrix, (BoxQP) can be solved in polynomial time~\cite{kozlov1979polynomial}. However, if $Q$ is an indefinite or negative semidefinite matrix, then (BoxQP) is an NP-hard problem \cite{Sahni74,Pardalos1991QuadraticPW}. In fact, it is even NP-hard to approximate a local minimizer of (BoxQP)~\cite{AhmadiZ22}.

\subsection{RLT and SDP-RLT Relaxations}

By using a simple ``lifting'' idea, (BoxQP) can be equivalently reformulated as 
\[
\textrm{(L-BoxQP)} \quad \ell^* = \min\limits_{(x,X) \in \cF}\textstyle \frac{1}{2}\langle Q, X \rangle + c^T x,
\]
where $\langle A, B \rangle = \textrm{trace}(A^T B) = \sum\limits_{i=1}^p \sum\limits_{j=1}^q A_{ij} B_{ij}$ for any $A \in \R^{p \times q}$ and $B \in \R^{p \times q}$, and 
\begin{equation} \label{def_cF}
\cF = \left\{(x,X) \in \R^n \times \cS^n: 0 \leq x \leq e, \quad X_{ij} = x_i x_j, \quad 1 \leq i \leq j \leq n\right\}.
\end{equation}
Since (L-BoxQP) is an optimization problem with a linear objective function over a nonconvex feasible region, one can replace $\cF$ by $\textrm{conv}(\cF)$, where $\textrm{conv}(\cdot)$ denotes the convex hull, without affecting the optimal value. Many convex relaxations of (BoxQP) arise from this reformulation by employing outer approximations of $\textrm{conv}(\cF)$ using tractable convex sets. 

A well-known relaxation of $\textrm{conv}(\cF)$ is obtained by replacing the nonlinear equalities $X_{ij} = x_i x_j$ by the so-called McCormick inequalities~\cite{mccormick1976computability}, which gives rise to the RLT (Reformulation-Linearization Technique) relaxation of (BoxQP) (see, e.g., \cite{SheraliT92}):
\[
\textrm{(R)} \quad \ell^*_R = \min\limits_{(x,X) \in \R^n \times \cS^n} \left\{\frac{1}{2}\langle Q, X\rangle + c^T x: (x,X) \in \cF_R\right\},
\]
where 
\begin{equation} \label{def_cFR_alt}
\cF_R = \left\{(x,X) \in \R^n \times \cS^n:\begin{array}{rcccl} 0 & \leq & x & \leq & e\\ \max\{x_i + x_j -1,0\} & \leq & X_{ij} & \leq & \min \{x_i,x_j\}, \quad 1 \leq i \leq j \leq n \end{array}\right\}.
\end{equation}

The RLT relaxation (R) of (BoxQP) can be further strengthened by adding tighter semidefinite constraints~\cite{1987Quadratic}, giving rise to the SDP-RLT relaxation:
\[
\textrm{(RS)} \quad \ell^*_{RS} = \min\limits_{(x,X) \in \R^n \times \cS^n} \left\{\frac{1}{2}\langle Q, X\rangle + c^T x: (x,X) \in \cF_{RS}\right\},
\]
where 
\begin{equation} \label{def_cFRS}
\cF_{RS} = \left\{(x,X) \in \R^n \times \cS^n: (x,X) \in \cF_R, \quad X - x x^T \succeq 0\right\}.
\end{equation}

The optimal value of each of the RLT and SDP-RLT relaxations, denoted by $\ell^*_R$ and $\ell^*_{RS}$, respectively, yields a lower bound on the optimal value of (BoxQP). The SDP-RLT relaxation is clearly at least as tight as the RLT relaxation, i.e., 
\begin{equation} \label{convex_rels}
\ell^*_R \leq \ell^*_{RS} \leq \ell^*. 
\end{equation}

\subsection{Motivation and Contributions}

Convex relaxations play a fundamental role in the design of global solution methods for nonconvex optimization problems. In particular, one of the most prominent algorithmic approaches for globally solving nonconvex optimization problems is based on a branch-and-bound framework, in which the feasible region is systematically subdivided into smaller subregions and a sequence of subproblems is solved to obtain increasingly tighter lower and upper bounds on the optimal value in each subregion. The lower bounds in such a scheme are typically obtained by solving a convex relaxation. For instance, several well-known optimization solvers such as {\tt ANTIGONE}~\cite{misener2014antigone}, {\tt BARON}~\cite{ts:05}, {\tt CPLEX}~\cite{cplex}, and {\tt GUROBI}~\cite{gurobi} utilize convex relaxations for globally solving nonconvex quadratic programs.

In this paper, our main goal is to describe the set of instances of (BoxQP) that admit exact RLT relaxations (i.e., $\ell_R^* = \ell^*$) as well as those that admit exact SDP-RLT relaxations (i.e., $\ell_{RS}^* = \ell^*$). Such descriptions shed light on easier subclasses of a difficult optimization problem. In addition, we aim to develop efficient algorithms for constructing an instance of (BoxQP) that admits an exact or inexact relaxation. Such algorithms can be quite useful in computational experiments for generating instances of (BoxQP) for which a particular relaxation will have a predetermined exactness or inexactness guarantee.

Our contributions are as follows.

\begin{enumerate}
    \item By utilizing the recently proposed perspective on convex underestimators induced by convex relaxations~\cite{yildirim2020alternative}, we  
    establish several useful properties of each of the two convex underestimators associated with the RLT relaxation and the SDP-RLT relaxation. 
    \item We present two equivalent algebraic descriptions of the set of instances of (BoxQP) that admit exact RLT relaxations. The first description arises from the analysis of the convex underestimator induced by the RLT relaxation, whereas the second description is obtained by using linear programming duality. 
    \item By relying on the second description of the set of instances with an exact RLT relaxation, we propose an algorithm for efficiently constructing an instance of (BoxQP) that admits an exact RLT relaxation and another algorithm for constructing an instance with an inexact RLT relaxation.
    \item We establish that strong duality holds and that primal and dual optimal solutions are attained for the SDP-RLT relaxation and its dual. By relying on this relation, we give an algebraic description of the set of instances of (BoxQP) that admit an exact SDP-RLT relaxation.
    \item By utilizing this algebraic description, we propose an algorithm for constructing an instance of (BoxQP) that admits an exact SDP-RLT relaxation and another one for constructing an instance that admits an exact SDP-RLT relaxation but an inexact RLT relaxation.
\end{enumerate}

This paper is organized as follows. We briefly review the literature in Section~\ref{lit_rev} and define our notation in Section~\ref{notation}. We review the optimality conditions in Section~\ref{Sec2}. We present several properties of the convex underestimators arising from the RLT and SDP-RLT relaxations in Section~\ref{Sec3}. Section~\ref{Sec4} focuses on the description of instances with exact RLT relaxations and presents two algorithms for constructing instances with exact and inexact RLT relaxations. SDP-RLT relaxations are treated in Section~\ref{Sec5}, which includes an algebraic description of instances with exact SDP-RLT relaxations and two algorithms for constructing instances with different exactness guarantees. We present several numerical examples and a brief discussion in Section~\ref{Sec6}. Finally, Section~\ref{Sec7} concludes the paper.

\subsection{Literature Review} \label{lit_rev}

Quadratic programs with box constraints have been extensively studied in the literature. Since our focus is on convex relaxations in this paper, we will mainly restrict our review accordingly. 

The set $\textrm{conv}(\cF)$, where $\cF$ is given by \eqref{def_cF}, has been investigated in several papers (see, e.g., ~\cite{yajima1998polyhedral,burer2009nonconvex,anstreicher2009semidefinite,anstreicher2010computable,DongL13,BonamiGL18}). This is a nonpolyhedral convex set even for $n = 1$. However, it turns out that $\textrm{conv}(\cF)$ is closely related to the so-called Boolean quadric polytope~\cite{padberg1989boolean} that arises in unconstrained binary quadratic programs, which can be formulated as an instance of (BoxQP)~\cite{Rag1969}, and is given by $\textrm{conv}(\cF^-)$, where
\begin{equation} \label{def_cF-}
\cF^- = \left\{(x,z) \in \R^n \times \R^{{n \choose 2}}: x_i \in \{0,1\}, \quad  z_{ij} = x_i x_j, \quad 1 \leq i < j \leq n\right\}.
\end{equation}
The linear programming relaxation of $\textrm{conv}(\cF^-)$, denoted by $\cF_R^-$, is given by
\begin{equation} \label{def_cFR-}
\cF_R^- = \left\{(x,z) \in \R^n \times \R^{{n \choose 2}}:\begin{array}{rcccl} 0 & \leq & x & \leq & e\\ \max\{x_i + x_j -1,0\} & \leq & z_{ij} & \leq & \min \{x_i,x_j\}, \quad 1 \leq i < j \leq n \end{array}\right\},  
\end{equation}
which is very similar to $\cF_R$, except that McCormick inequalities are only applied to $1 \leq i < j \leq n$. In particular, $\cF_R^- = \textrm{conv}(\cF^-)$ for $n = 2$~\cite{mccormick1976computability}. Padberg~\cite{padberg1989boolean} identifies several facets of $\textrm{conv}(\cF^-)$ and shows that the components of each vertex of $\cF_R^-$ are in the set $\{0,\textstyle \frac{1}{2},1\}$. Yajima and Fujie~\cite{yajima1998polyhedral} show how to extend the valid inequalities for $\cF^-$ in~\cite{padberg1989boolean} to $\cF$. Burer and Letchford~\cite{burer2009nonconvex} extend this result further by observing that $\textrm{conv}(\cF^-)$ is the projection of $\textrm{conv}(\cF)$ onto the ``common variables.'' They also give a description of the set of extreme points of $\textrm{conv}(\cF)$. We refer the reader to~\cite{DongL13} for further refinements and to~\cite{BonamiGL18} for a computational procedure based on such valid inequalities. 

Anstreicher~\cite{anstreicher2009semidefinite} reports computational results illustrating that the SDP-RLT relaxation significantly improves the RLT relaxation and gives a theoretical justification of the improvement by comparing $\cF_R$ and $\cF_{RS}$ for $n = 2$. Anstreicher and Burer~\cite{anstreicher2010computable} show that $\cF_{RS} = \textrm{conv}(\cF)$ if and only if $n \leq 2$. In particular, this implies that the SDP-RLT relaxation of (BoxQP) is always exact for $n \leq 2$.

We next briefly review the literature on exact convex relaxations. Several papers have identified conditions under which a particular convex relaxation of a class of optimization problems is exact. For quadratically constrained quadratic programs, we refer the reader to~\cite{KimK03,SojoudiL14,JeyakumarL14,Locatelli16,Ho-NguyenK17,BurerY20,WangK22a,AzumaFKY22} for various exactness conditions for second-order cone or semidefinite relaxations. Recently, a large family of convex relaxations of general quadratic programs was considered in a unified manner through induced convex underestimators and a general algorithmic procedure was proposed for constructing instances with inexact relaxations for various convex relaxations~\cite{yildirim2020alternative}.

In this work, our focus is on algebraic descriptions and algorithmic constructions of instances of (BoxQP) that admit exact and inexact RLT and SDP-RLT relaxations. Therefore, our focus is similar to~\cite{SagolY15,GokmenY22}, which presented descriptions of such instances of standard quadratic programs for RLT and SDP-RLT relaxations, respectively.

\subsection{Notation} \label{notation}

We use $\R^n$, $\R^{m \times n}$, and $\cS^n$ to denote the $n$-dimensional Euclidean space, the set of $m \times n$ real matrices, and the space of $n \times n$ real symmetric matrices, respectively.  We use 0 to denote the real number 0, the vector of all zeroes, as well as the matrix of all zeroes, which should always be clear from the context. We reserve $e \in \R^n$ for the vector of all ones. All inequalities on vectors or matrices are componentwise. For $A \in \cS^n$, we use $A \succeq 0$ (resp., $A \succ 0$) to denote that $A$ is positive semidefinite (resp., positive definite). For index sets $\mathbb{J} \subseteq \{1,\ldots,m\}$, $\mathbb{K} \subseteq \{1,\ldots,n\}$, $x \in \R^n$, and $B \in \R^{m \times n}$, we denote by $x_\mathbb{K} \in \R^{|\mathbb{K}|}$ the subvector of $x$ restricted to the indices in $\mathbb{K}$ and by $B_{\mathbb{J}\mathbb{K}} \in \R^{|\mathbb{J}|\times|\mathbb{K}|}$ the submatrix of $B$ whose rows and columns are indexed by $\mathbb{J}$ and $\mathbb{K}$, respectively, where $|\cdot|$ denotes the cardinality of a finite set. We simply use $x_j$ and $Q_{ij}$ for singleton index sets. For any $U \in \R^{m \times n}$ and $V \in \R^{m \times n}$, the trace inner product is denoted by 
\[
\langle U, V \rangle = \textrm{trace}(U^T V) = \sum\limits_{i=1}^m \sum\limits_{j = 1}^n U_{ij} V_{ij}.
\]

For an instance of (BoxQP) given by $(Q,c) \in \cS^n \times \R^n$, we define
\begin{eqnarray}
q(x) & = & \textstyle \frac{1}{2} x^T Q x + c^T x, \label{def_qx}\\
F & = & \{x \in \R^n: 0 \leq x \leq e\}, \label{feasible set of box QP}\\
\ell^* & = & \min\limits_{x \in \R^n} \left\{q(x): x \in F\right\}, \label{def_ell*}\\
V & = & \{x \in F: x_j \in \{0,1\}, \quad j = 1, \ldots, n\}. \label{vertices of box QP}
\end{eqnarray}

For a given instance of (BoxQP), note that $q(x)$, $F$, $\ell^*$, and $V$ denote the objective function, the feasible region, the optimal value, and the set of vertices, respectively. For $\hat x \in F$, we define the following index sets:
\begin{eqnarray}
\mathbb{L} & = & \mathbb{L}(\hat x) = \{j \in \{1,\ldots,n\}: \hat x_j = 0\}, \label{def_indL} \\
\mathbb{B} & = & \mathbb{B}(\hat x) = \{j \in \{1,\ldots,n\}: 0 < \hat x_j < 1\}, \label{def_indB} \\
\mathbb{U} & = & \mathbb{U}(\hat x) = \{j \in \{1,\ldots,n\}: \hat x_j = 1\}. \label{def_indU}
\end{eqnarray}

\section{Optimality Conditions} \label{Sec2}

In this section, we review first-order and second-order optimality conditions for (BoxQP). 

Let $\hat x \in F$ be a local minimizer of (BoxQP).
By the first-order optimality conditions, there exists $(\hat u, \hat v) \in \R^n \times \R^n$ such that 
\begin{eqnarray} \label{kkt_cond}
        Q \hat x + c + \hat u - \hat v & = & 0, \label{kkt1}\\
        \hat u_{\mathbb{L} \cup \mathbb{B}} & = & 0, \label{kkt2}\\
        \hat v_{\mathbb{B} \cup \mathbb{U}} & = & 0, \label{kkt3}\\
        \hat u & \geq & 0, \label{kkt4}\\
        \hat v & \geq & 0. \label{kkt5}
\end{eqnarray}
Note that $\hat u \in \R^n$ and $\hat v \in \R^n$ are the Lagrange multipliers corresponding to the constraints $x \leq e$ and $x \geq 0$ in (BoxQP), respectively. 

For a local minimizer $\hat x \in F$ of (BoxQP), the second-order optimality conditions are given by 
\begin{equation} \label{sonc}
d^TQd \geq 0, \quad \forall \, d \in D(\hat x),
\end{equation}
where $D(\hat x)$ is the set of feasible directions at $\hat x$ at which the directional derivative of the objective function vanishes, i.e., 
\begin{equation} \label{def_Dx}
D(\hat x) := \{d \in \R^n: (Q \hat x + c)^T d = 0, \quad d_\mathbb{L} \geq 0, \quad d_\mathbb{U} \leq 0\}.
\end{equation}
Note, in particular, that 
\begin{equation} \label{Q_BB_psd}
\textrm{$\hat x \in F$ is a local minimizer} \Rightarrow Q_{\mathbb{B}\mathbb{B}} \succeq 0.   
\end{equation}
In fact, $\hat x \in F$ is a local minimizer of (BoxQP) if and only if the first-order and second-order optimality conditions given by \eqref{kkt1}--\eqref{kkt5} and \eqref{sonc}, respectively, are satisfied (see, e.g., \cite{ref:majthay1971,ref:jiaquan1982}). 

\section{Properties of RLT and SDP-RLT Relaxations} \label{Sec3}

Given an instance of (BoxQP), recall that the RLT relaxation is given by
\[
\textrm{(R)} \quad \ell^*_R = \min\limits_{(x,X) \in \R^n \times \cS^n} \left\{\frac{1}{2}\langle Q, X\rangle + c^T x: (x,X) \in \cF_R\right\},
\]
where $\cF_R$ is given by \eqref{def_cFR_alt}, and the SDP-RLT relaxation by 
\[
\textrm{(RS)} \quad \ell^*_{RS} = \min\limits_{(x,X) \in \R^n \times \cS^n} \left\{\frac{1}{2}\langle Q, X\rangle + c^T x: (x,X) \in \cF_{RS}\right\},
\]
where $\cF_{RS}$ is given by \eqref{def_cFRS}. 

Every convex relaxation of a nonconvex quadratic program obtained through lifting induces a convex underestimator on the objective function over the feasible region~\cite{yildirim2020alternative}. In this section, we introduce the convex underestimators induced by RLT and SDP-RLT relaxations and establish several properties of these underestimators. 

\subsection{Convex Underestimators} \label{convex_underest}

In this section, we introduce the convex underestimators induced by the RLT and SDP-RLT relaxations. Let us first define the following sets parametrized by $\hat x \in F$:
\begin{eqnarray} \label{param_feas_regions}
\cF_R(\hat x) & = & \{(x,X) \in \cF_{R}: x = \hat x\}, \quad \hat x \in F, \label{param_feas_RLT}\\
\cF_{RS}(\hat x) & = & \{(x,X) \in \cF_{RS}: x = \hat x\}, \quad \hat x \in F. \label{param_feas_SDP-RLT}
\end{eqnarray}

For each $\hat x \in F$, we clearly have $\{(\hat x,\hat x \hat x^T)\} \subseteq \cF_{RS}(\hat x) \subseteq \cF_R(\hat x)$ and 
\[
\bigcup\limits_{\hat x \in F} \cF_R(\hat x) = \cF_R, \quad \bigcup\limits_{\hat x \in F} \cF_{RS}(\hat x) = \cF_{RS}.
\]

Next, we define the following functions:
\begin{eqnarray} \label{conv_underest}
    \ell_R(\hat x) & = & \min_{x \in \R^n,X \in \cS^n} \left\{ \textstyle \frac{1}{2}\langle Q, X \rangle + c^T x: (x,X) \in \cF_R(\hat x)\right\}, \quad \hat x \in F, \label{conv_und_RLT}\\
    \ell_{RS}(\hat x) & = & \min_{x \in \R^n,X \in \cS^n} \left\{ \textstyle \frac{1}{2}\langle Q, X \rangle + c^T x: (x,X) \in \cF_{RS}(\hat x)\right\}, \quad \hat x \in F. \label{conv_und_SDP-RLT}
\end{eqnarray}
Note that the functions $\ell_R(\cdot)$ and $\ell_{RS}(\cdot)$ return the best objective function value of the corresponding relaxation subject to the additional constraint that $x = \hat x$. By~\cite{yildirim2020alternative}, each of $\ell_R(\cdot)$ and $\ell_{RS}(\cdot)$ is a convex function over $F$ satisfying the relations 
\begin{equation} \label{conv_underest_rel_1}
\ell_R(\hat x) \leq \ell_{RS}(\hat x) \leq q(\hat x), \quad \hat x \in F,
\end{equation}
and 
\begin{eqnarray} \label{conv_underest_rel_2}
\textrm{(R1)} \quad \ell^*_R & = & \min\limits_{x \in F} \ell_R(x), \label{def_R1} \\
\textrm{(RS1)} \quad \ell^*_{RS} & = & \min\limits_{x \in F} \ell_{RS}(x). \label{def_RS1}
\end{eqnarray}
The convex underestimators $\ell_R(\cdot)$ and $\ell_{RS}(\cdot)$ allow us to view the RLT and SDP-RLT relaxations in the original space $\R^n$ of (BoxQP) by appropriately projecting out the lifted variables $X \in \cS^n$ that appear in each of (R) and (RS). As such, (R1) and (RS1) can be viewed as ``reduced'' formulations of the RLT relaxation and the SDP-RLT relaxation, respectively. 
In the remainder of this manuscript, we will alternate between the two equivalent formulations (R) and (R1) for the RLT relaxation as well as (RS) and (RS1) for the SDP-RLT relaxation.

\subsection{Properties of Convex Underestimators}

In this section, we present several properties of the convex underestimators $\ell_R(\cdot)$ and $\ell_{RS}(\cdot)$ given by \eqref{conv_und_RLT} and \eqref{conv_und_SDP-RLT}, respectively. 

First, we start with the observation that $\ell_R(\cdot)$ has a very specific structure with a simple closed-form expression.

\begin{lemma} \label{piecewise-lin}
$\ell_R(\cdot)$ is a piecewise linear convex function on $F$ given by 
\begin{equation} \label{ell_R_closed}
\ell_R(\hat x) = \frac{1}{2}\left(\sum\limits_{(i,j): Q_{ij} > 0} Q_{ij} \max\{0, \hat{x}_i + \hat{x}_j - 1\} + \sum\limits_{(i,j): Q_{ij} < 0} Q_{ij} \min\{\hat{x}_i,\hat{x}_j\}\right) + c^T \hat x, \quad \hat x \in F.    
\end{equation}
\end{lemma}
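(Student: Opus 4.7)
The plan is to observe that in the defining minimization for $\ell_R(\hat x)$, the vector $x = \hat x$ is fixed, so the only decision variable is $X \in \cS^n$, and the constraints on $X$ in $\cF_R(\hat x)$ are completely decoupled across the entries $X_{ij}$ for $1 \le i \le j \le n$. Specifically, each $X_{ij}$ is confined to the interval $[\max\{0,\hat x_i+\hat x_j-1\},\,\min\{\hat x_i,\hat x_j\}]$, independently of the other entries, and the objective $\tfrac{1}{2}\langle Q,X\rangle + c^T\hat x$ is linear in $X$. So minimizing over $X$ reduces to a family of one-dimensional linear programs on intervals.

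Next I would push each entry to the endpoint dictated by the sign of its coefficient. Writing $\tfrac{1}{2}\langle Q,X\rangle = \tfrac{1}{2}\sum_{i,j}Q_{ij}X_{ij}$ and using symmetry $Q_{ij}=Q_{ji}$, $X_{ij}=X_{ji}$, each independent variable $X_{ij}$ with $i\le j$ enters the objective with the coefficient $Q_{ij}$ (with multiplicity one on the diagonal and two off the diagonal, which is absorbed by summing over all ordered pairs with the $\tfrac{1}{2}$ in front). Hence, setting
\[
X_{ij}^{\ast} = \begin{cases} \max\{0,\hat x_i+\hat x_j-1\}, & Q_{ij}>0,\\ \min\{\hat x_i,\hat x_j\}, & Q_{ij}<0,\\ \text{anything feasible}, & Q_{ij}=0, \end{cases}
\]
is optimal, and plugging these back into $\tfrac{1}{2}\sum_{i,j}Q_{ij}X_{ij}^{\ast} + c^T\hat x$ and re-expressing the double sum as a sum over ordered pairs yields exactly the formula \eqref{ell_R_closed}.

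Finally, I would note that the resulting expression is a sum of functions of the form $Q_{ij}\max\{0,\hat x_i+\hat x_j-1\}$ with $Q_{ij}>0$ (each of which is convex and piecewise linear) and $Q_{ij}\min\{\hat x_i,\hat x_j\}$ with $Q_{ij}<0$ (which is a positive multiple of $\max\{-\hat x_i,-\hat x_j\}$ up to a sign, hence also convex and piecewise linear), plus the linear term $c^T\hat x$. Thus $\ell_R(\cdot)$ is piecewise linear and convex on $F$; alternatively, convexity is already known from \cite{yildirim2020alternative}.

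The only potentially subtle point is the bookkeeping of the factor $\tfrac{1}{2}$ and making sure that the sums over ordered pairs $(i,j)$ in the statement correctly reproduce the symmetric trace inner product; the actual optimization is immediate once decoupling is observed. There is no real obstacle beyond being careful with these indices.
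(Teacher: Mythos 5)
Your proposal is correct and follows essentially the same route as the paper's (very terse) proof: with $x=\hat x$ fixed, the constraints of $\cF_R(\hat x)$ decouple entrywise, so the minimization over $X$ is solved by pushing each $X_{ij}$ to the endpoint of its interval determined by the sign of $Q_{ij}$, and convexity then follows because \eqref{ell_R_closed} is a finite sum of piecewise linear convex terms. Your index bookkeeping (absorbing the off-diagonal multiplicity of two into the factor $\tfrac{1}{2}$ and the sum over ordered pairs) is exactly right.
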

\begin{proof}
For each $\hat x \in F$, the relation \eqref{ell_R_closed} follows from \eqref{conv_und_RLT} and \eqref{def_cFR_alt}.
It follows that $\ell_R(\cdot)$ is a piecewise linear convex function on $F$ since it is given by the sum of a finite number of piecewise linear convex functions.
\end{proof}

In contrast with $\ell_R(\cdot)$ given by the optimal value of a simple linear programming problem with bound constraints, $\ell_{RS}(\cdot)$ does not, in general, have a simple closed-form expression as it is given by the optimal value of a semidefinite programming problem. 

The next result states a useful decomposition property regarding the sets $\cF_R(\hat x)$ and $\cF_{RS}(\hat x)$.

\begin{lemma} \label{feas_cond}
For any $\hat x \in F$,
$(\hat x, \hat X) \in \cF_R(\hat x)$ if and only if there exists $\hat M \in \cM_R(\hat x)$ such that $\hat X = \hat x \hat x^T + \hat M$, where
\begin{equation} \label{def_cMR}
\cM_R(\hat x) = \left\{M \in \cS^n: \begin{array}{rcl} M_{ij} & \leq & \min\{\hat x_i - \hat x_i \hat x_j, \hat x_j - \hat x_i \hat x_j\}, \quad i \in \mathbb{B}, j \in \mathbb{B},\\
M_{ij} & \geq & \max\{-\hat x_i \hat x_j, \hat x_i + \hat x_j - 1 - \hat x_i \hat x_j\}, \quad i \in \mathbb{B}, j \in \mathbb{B},\\
M_{ij} & = & 0, \quad \textrm{otherwise.}
\end{array}
\right\}, 
\end{equation}
where $\mathbb{B}$ is given by \eqref{def_indB}. 
Furthermore, $(\hat x, \hat X) \in \cF_{RS}(\hat x)$ if and only if $\hat M \in \cM_{RS}(\hat x)$, where 
\begin{equation} \label{def_cMRS}
\cM_{RS}(\hat x) = \left\{M \in \cS^n: M \in \cM_R(\hat x), \quad M \succeq 0\right\}. 
\end{equation}
\end{lemma}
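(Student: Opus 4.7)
The plan is to prove both equivalences by the substitution $\hat M := \hat X - \hat x\hat x^T$ and rewriting the componentwise constraints in (\ref{def_cFR_alt}) that define $\cF_R$. Since $\cF_R(\hat x)$ fixes $x = \hat x$, membership in $\cF_R(\hat x)$ reduces to the McCormick inequalities for $\hat X_{ij}$ at the given point $\hat x \in F$. Subtracting $\hat x_i \hat x_j$ from each side of
\[
\max\{\hat x_i + \hat x_j - 1, 0\} \leq \hat X_{ij} \leq \min\{\hat x_i, \hat x_j\}
\]
converts the bounds on $\hat X_{ij}$ directly into the bounds on $\hat M_{ij} = \hat X_{ij} - \hat x_i \hat x_j$ appearing in \eqref{def_cMR}. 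So for each pair $(i,j)$ with $i,j \in \mathbb{B}$, the RLT constraint is equivalent to the corresponding pair of inequalities on $\hat M_{ij}$. This gives an immediate bijection between the feasible $\hat X$ and matrices $\hat M$ satisfying the $\mathbb{B}\times\mathbb{B}$ block conditions of $\cM_R(\hat x)$, in both directions.

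The remaining step is to handle the off-block entries, and the key observation is that the McCormick inequalities collapse to equalities whenever $\hat x_i$ or $\hat x_j$ is $0$ or $1$. I would verify this by a short case analysis using \eqref{def_indL}--\eqref{def_indU}: if $i \in \mathbb{L}$ so that $\hat x_i = 0$, then the upper bound $\min\{\hat x_i,\hat x_j\}$ and the lower bound $\max\{\hat x_i+\hat x_j-1,0\}$ both equal $0$ (using $\hat x_j \leq 1$), forcing $\hat X_{ij} = 0$ and hence $\hat M_{ij} = 0$. Symmetrically for $j \in \mathbb{L}$. If $i \in \mathbb{U}$ so that $\hat x_i = 1$, then both bounds collapse to $\hat x_j$, forcing $\hat X_{ij} = \hat x_j$ and $\hat M_{ij} = \hat x_j - \hat x_j = 0$; symmetrically for $j \in \mathbb{U}$. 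Thus for every $(i,j)$ with $i \notin \mathbb{B}$ or $j \notin \mathbb{B}$, the RLT constraints force $\hat M_{ij} = 0$, matching the last row of \eqref{def_cMR}. Combining the two cases yields the first equivalence: $(\hat x,\hat X) \in \cF_R(\hat x)$ iff $\hat X = \hat x\hat x^T + \hat M$ for some $\hat M \in \cM_R(\hat x)$.

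For the SDP-RLT statement, I would invoke the first part together with the definition \eqref{def_cFRS}: the additional constraint defining $\cF_{RS}$ is precisely $\hat X - \hat x\hat x^T \succeq 0$, which under the substitution $\hat M = \hat X - \hat x\hat x^T$ becomes $\hat M \succeq 0$. Hence $(\hat x,\hat X) \in \cF_{RS}(\hat x)$ iff $\hat M \in \cM_R(\hat x)$ and $\hat M \succeq 0$, which is exactly the definition of $\cM_{RS}(\hat x)$ in \eqref{def_cMRS}.

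There is no real obstacle here: the lemma is essentially a change of variables followed by bookkeeping, and the only thing to be careful about is the case split on $\mathbb{L},\mathbb{B},\mathbb{U}$ to show that all ``off-block'' entries of $\hat M$ must vanish. I would present the case analysis compactly, noting that in each of the boundary cases the upper and lower McCormick bounds coincide, making the equality $\hat M_{ij} = 0$ forced rather than merely allowed.
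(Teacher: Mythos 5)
Your proposal is correct and follows essentially the same route as the paper, which simply notes that both assertions follow from the definitions of $\cF_R(\hat x)$, $\cF_{RS}(\hat x)$ and the decomposition $\hat X = \hat x \hat x^T + \hat M$; your case analysis on $\mathbb{L}$, $\mathbb{B}$, $\mathbb{U}$ just makes explicit the bookkeeping the paper leaves implicit.
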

\begin{proof}
Both assertions follow from \eqref{param_feas_RLT}, \eqref{def_cFR_alt}, \eqref{param_feas_SDP-RLT}, \eqref{def_cFRS}, and the decomposition $\hat X = \hat x \hat x^T + \hat M$.     
\end{proof}

By Lemma~\ref{feas_cond}, we remark that $M_{ij}$ has a negative lower bound and a positive upper bound in \eqref{def_cMR} if and only if $i \in \mathbb{B}$ and $j \in \mathbb{B}$. Therefore, for any $\hat x \in \cF$ and any $(\hat x,\hat X) \in \cF_R$ (and hence any $(\hat x,\hat X) \in \cF_{RS}$), we obtain
\begin{equation} \label{X_fixed_part}
 \hat X_{ij} = \hat x_i \hat x_j, \quad i \not \in \mathbb{B}, \textrm{ or } j \not \in \mathbb{B}.   
\end{equation}
This observation yields the following result.

\begin{corollary} \label{vertex-prop}
For any vertex $v \in F$, $\cF_R(v) = \cF_{RS}(v) = \{(v,vv^T)\}$.    
\end{corollary}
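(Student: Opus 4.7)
The plan is to apply Lemma 3.2 directly, noting that the vertex structure forces the index set $\mathbb{B}(v)$ from \eqref{def_indB} to be empty. Specifically, if $v \in V$ is a vertex of $F$, then every coordinate $v_j$ lies in $\{0,1\}$, so no coordinate is strictly between $0$ and $1$; thus $\mathbb{B}(v) = \emptyset$, while $\mathbb{L}(v) \cup \mathbb{U}(v) = \{1,\ldots,n\}$.

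With $\mathbb{B}(v) = \emptyset$, the first two cases in the definition \eqref{def_cMR} of $\cM_R(v)$ are vacuous, and the ``otherwise'' clause forces $M_{ij} = 0$ for every pair $(i,j)$. Hence $\cM_R(v) = \{0\}$, and by Lemma 3.2 the only $\hat X$ with $(v,\hat X) \in \cF_R(v)$ is $\hat X = vv^T + 0 = vv^T$. This gives $\cF_R(v) = \{(v,vv^T)\}$.

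For the SDP-RLT set, I would observe the inclusions $\{(v,vv^T)\} \subseteq \cF_{RS}(v) \subseteq \cF_R(v)$: the right-hand inclusion follows from $\cF_{RS} \subseteq \cF_R$, and the left-hand inclusion follows because $vv^T - vv^T = 0 \succeq 0$ together with $M = 0 \in \cM_R(v)$ means $0 \in \cM_{RS}(v)$. Combining this sandwich with $\cF_R(v) = \{(v,vv^T)\}$ yields $\cF_{RS}(v) = \{(v,vv^T)\}$ as well.

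There is no real obstacle here: the entire content of the corollary is that the McCormick envelope collapses at a $0/1$ point, which is immediate once Lemma 3.2 has been established. The only minor care is to cite the empty-set convention (both the upper and lower bound conditions in \eqref{def_cMR} are trivially satisfied when $\mathbb{B}(v) = \emptyset$) and to note that the observation \eqref{X_fixed_part} already records exactly this collapse, so Corollary 3.1 can alternatively be read off as a special case of \eqref{X_fixed_part} applied coordinatewise.
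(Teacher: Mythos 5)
Your proposal is correct and follows essentially the same route as the paper: the paper's proof is exactly the observation that $\mathbb{B}(v)=\emptyset$ collapses the description in Lemma~\ref{feas_cond} (recorded as \eqref{X_fixed_part}) to $\hat X = vv^T$, with the SDP-RLT case following from the sandwich $\{(v,vv^T)\}\subseteq \cF_{RS}(v)\subseteq \cF_R(v)$. No gaps; your closing remark that the corollary is a special case of \eqref{X_fixed_part} is precisely the paper's one-line argument.
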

\begin{proof}
The claim directly follows from \eqref{X_fixed_part} since $\mathbb{B} = \emptyset$.   
\end{proof}

The decomposition in Lemma~\ref{feas_cond} can be translated into the functions $\ell_R(\cdot)$ and $\ell_{RS}(\cdot)$.

\begin{lemma} \label{underest_decom}
For each $\hat x \in F$,
\begin{eqnarray} \label{decomp_rels}
\ell_R(\hat x) & = & q(\hat x) + \textstyle \frac{1}{2} \min\limits_{M \in \cM_R(\hat x)} \langle Q, M \rangle, \label{decomp_rlt}\\
\ell_{RS}(\hat x) & = & q(\hat x) + \textstyle \frac{1}{2} \min\limits_{M \in \cM_{RS}(\hat x)} \langle Q, M \rangle, \label{decomp_rlt_sdp}
\end{eqnarray}
where $\cM_R(\hat x)$ and $\cM_{RS}(\hat x)$ are given by \eqref{def_cMR} and \eqref{def_cMRS}, respectively. 
\end{lemma}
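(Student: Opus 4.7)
The plan is to reduce both identities to a direct substitution using the decomposition established in Lemma~\ref{feas_cond}. Concretely, starting from the definition \eqref{conv_und_RLT}, any feasible point of $\cF_R(\hat x)$ is of the form $(\hat x, \hat x \hat x^T + M)$ with $M \in \cM_R(\hat x)$, and conversely every such $M$ yields a feasible point. So the minimization over $(x,X)\in\cF_R(\hat x)$ collapses to a minimization over $M\in\cM_R(\hat x)$ of the objective evaluated at $X=\hat x\hat x^T+M$.

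The next step is to expand the objective using bilinearity of the trace inner product:
\[
\textstyle \frac{1}{2}\langle Q,\, \hat x\hat x^T + M\rangle + c^T\hat x \;=\; \textstyle \frac{1}{2}\langle Q,\hat x\hat x^T\rangle + c^T\hat x \;+\; \textstyle \frac{1}{2}\langle Q,M\rangle.
\]
The identity $\langle Q,\hat x\hat x^T\rangle = \hat x^T Q\hat x$ (a standard consequence of $\mathrm{trace}(Q\hat x\hat x^T)=\hat x^T Q\hat x$) shows that the first two terms reproduce $q(\hat x)$ as defined in \eqref{def_qx}. Since $q(\hat x)$ is independent of $M$, it can be pulled out of the minimization, giving \eqref{decomp_rlt}.

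For the SDP-RLT case \eqref{decomp_rlt_sdp}, I would repeat the same argument verbatim, using the second assertion of Lemma~\ref{feas_cond}: feasibility in $\cF_{RS}(\hat x)$ is equivalent to $M \in \cM_{RS}(\hat x)$, and the objective decomposition is unchanged because it only depends on the substitution $\hat X = \hat x\hat x^T + M$, not on the additional positive semidefiniteness constraint on $M$. Thus only the feasible set of the inner minimization is replaced by $\cM_{RS}(\hat x)$.

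There is no substantive obstacle here: the entire content of the lemma is the algebraic rearrangement together with the set-level parameterization already supplied by Lemma~\ref{feas_cond}. The only minor point that must be made explicit is well-definedness of the inner minimum, which follows because $\cM_R(\hat x)$ and $\cM_{RS}(\hat x)$ are nonempty (they contain $M=0$, corresponding to the feasible lifted point $(\hat x,\hat x\hat x^T)$) and compact (they are bounded by the box constraints in \eqref{def_cMR} and closed, and in the SDP case additionally intersected with the closed PSD cone), so the minima in \eqref{decomp_rlt}–\eqref{decomp_rlt_sdp} are attained.
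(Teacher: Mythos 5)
Your proposal is correct and follows essentially the same route as the paper, which simply cites the definitions \eqref{conv_und_RLT}, \eqref{conv_und_SDP-RLT} and Lemma~\ref{feas_cond}; you have merely written out the substitution $\hat X = \hat x\hat x^T + M$, the bilinear expansion, and the identification $\frac{1}{2}\langle Q,\hat x\hat x^T\rangle + c^T\hat x = q(\hat x)$ that the paper leaves implicit. The added remark on attainment (nonemptiness via $M=0$ and compactness of $\cM_R(\hat x)$ and $\cM_{RS}(\hat x)$) is accurate and harmless.
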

\begin{proof}
The assertions directly follow from \eqref{conv_und_RLT}, \eqref{conv_und_SDP-RLT}, and Lemma~\ref{feas_cond}.    
\end{proof}

By Lemma~\ref{underest_decom}, we can easily establish the following properties.

\begin{lemma} \label{exact_underests}
Let $\hat x \in F$ and let $\mathbb{B} = \mathbb{B}(\hat x)$, where $\mathbb{B}(\hat x)$ is given by \eqref{def_indB}. 
\begin{enumerate}
    \item[(i)] $\ell_R(\hat x) = q(\hat x)$ if and only if $\hat x$ is a vertex of $F$ or $Q_{\mathbb{B}\mathbb{B}} = 0$.
    \item[(ii)] $\ell_{RS}(\hat x) = q(\hat x)$ if and only if $\hat x$ is a vertex of $F$ or $Q_{\mathbb{B}\mathbb{B}} \succeq 0$.
\end{enumerate}
\end{lemma}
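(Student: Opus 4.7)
The plan is to reduce both claims, via Lemma~\ref{underest_decom}, to an analysis of when the linear/semidefinite ``correction'' program over $\cM_R(\hat x)$ or $\cM_{RS}(\hat x)$ attains its minimum at $0$. Concretely, by \eqref{decomp_rlt} and \eqref{decomp_rlt_sdp}, we have $\ell_R(\hat x) = q(\hat x)$ iff $\min_{M \in \cM_R(\hat x)} \langle Q, M \rangle = 0$, and $\ell_{RS}(\hat x) = q(\hat x)$ iff $\min_{M \in \cM_{RS}(\hat x)} \langle Q, M \rangle = 0$. Since $M = 0$ lies in $\cM_{RS}(\hat x) \subseteq \cM_R(\hat x)$, each minimum is at most $0$, so in both cases exactness is equivalent to nonnegativity of $\langle Q, M \rangle$ over the respective feasible set. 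Moreover, the structural condition ``$M_{ij} = 0$ whenever $i \notin \mathbb{B}$ or $j \notin \mathbb{B}$'' in \eqref{def_cMR} lets us replace $\langle Q, M \rangle$ by $\langle Q_{\mathbb{B}\mathbb{B}}, M_{\mathbb{B}\mathbb{B}} \rangle$ throughout.

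The vertex case is immediate: if $\hat x$ is a vertex, then $\mathbb{B} = \emptyset$, so $\cM_R(\hat x) = \cM_{RS}(\hat x) = \{0\}$ by Corollary~\ref{vertex-prop} (or directly from \eqref{def_cMR}), and both exactness statements follow trivially.

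For the non-vertex case I would first verify the following elementary ``full-dimensionality'' fact: for each $i, j \in \mathbb{B}$ (including $i = j$), because $\hat x_i, \hat x_j \in (0,1)$, the lower bound on $M_{ij}$ in \eqref{def_cMR} is strictly negative and the upper bound is strictly positive. Hence the projection of $\cM_R(\hat x)$ onto coordinates $(i,j)$ with $i,j \in \mathbb{B}$ contains an open neighborhood of the origin in $\cS^{|\mathbb{B}|}$. This immediately gives part (i): for the reverse direction, $Q_{\mathbb{B}\mathbb{B}} = 0$ forces $\langle Q, M\rangle = 0$ on $\cM_R(\hat x)$; for the forward direction, any nonzero $Q_{\mathbb{B}\mathbb{B}}$ admits a small matrix $M$ supported on $\mathbb{B} \times \mathbb{B}$ with $\langle Q_{\mathbb{B}\mathbb{B}}, M_{\mathbb{B}\mathbb{B}} \rangle < 0$ lying in $\cM_R(\hat x)$, contradicting the nonnegativity requirement.

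For part (ii) the same reduction applies with the extra constraint $M \succeq 0$, which, given the zero pattern outside $\mathbb{B}\times\mathbb{B}$, is equivalent to $M_{\mathbb{B}\mathbb{B}} \succeq 0$. If $Q_{\mathbb{B}\mathbb{B}} \succeq 0$, then $\langle Q_{\mathbb{B}\mathbb{B}}, M_{\mathbb{B}\mathbb{B}} \rangle \geq 0$ for every $M \in \cM_{RS}(\hat x)$, giving exactness. Conversely, if $Q_{\mathbb{B}\mathbb{B}} \not\succeq 0$, pick $v \in \R^{|\mathbb{B}|}$ with $v^T Q_{\mathbb{B}\mathbb{B}} v < 0$ and consider $M$ supported on $\mathbb{B} \times \mathbb{B}$ with $M_{\mathbb{B}\mathbb{B}} = t\, vv^T$ for sufficiently small $t > 0$; this $M$ is PSD and, by the full-dimensionality observation, lies in $\cM_{RS}(\hat x)$ while yielding $\langle Q, M \rangle < 0$. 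The main (and only mildly subtle) step is the full-dimensionality check on the box defining $\cM_R(\hat x)$, which is a short case check using $0 < \hat x_i, \hat x_j < 1$; once it is in place, both equivalences are essentially linear/semidefinite separation arguments.
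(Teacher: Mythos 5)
Your proposal is correct and follows essentially the same route as the paper: reduce via Lemma~\ref{underest_decom} to the condition $\min_{M}\langle Q, M\rangle = 0$ over $\cM_R(\hat x)$ (resp.\ $\cM_{RS}(\hat x)$) and then analyze the structure of these sets from Lemma~\ref{feas_cond}. The only difference is that you spell out the full-dimensionality of the $\mathbb{B}\times\mathbb{B}$ box around the origin and the resulting perturbation arguments, which the paper leaves implicit in the phrase ``the assertions now follow from Lemma~\ref{feas_cond}.''
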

\begin{proof}
By Lemma~\ref{underest_decom}, $\ell_R(\hat x) = q(\hat x)$ (resp., $\ell_{RS}(\hat x) = q(\hat x)$) if and only if $\min\limits_{M \in \cM_R(\hat x)} \langle Q, M \rangle = 0$ (resp., $\min\limits_{M \in \cM_{RS}(\hat x)} \langle Q, M \rangle = 0$). The assertions now follow from Lemma~\ref{feas_cond}.   
\end{proof}

Lemma~\ref{exact_underests} immediately gives rise to the following results about the underestimator $\ell_{RS}(\cdot)$.

\begin{corollary} \label{implications}
\begin{enumerate}
    \item[(i)] If $Q \succeq 0$, then $\ell_{RS}(\hat x) = q(\hat x)$ for each $\hat x \in F$.
    \item[(ii)] For any local or global minimizer $\hat x \in F$ of (BoxQP), we have $\ell_{RS}(\hat x) = q(\hat x)$. 
\end{enumerate}
\end{corollary}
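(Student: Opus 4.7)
The plan is to derive both parts directly from Lemma~\ref{exact_underests}(ii), which characterizes equality $\ell_{RS}(\hat x) = q(\hat x)$ by the condition that $\hat x$ is a vertex of $F$ or $Q_{\mathbb{B}\mathbb{B}} \succeq 0$, where $\mathbb{B} = \mathbb{B}(\hat x)$. So the whole task reduces to verifying the psd condition on the principal submatrix indexed by $\mathbb{B}$ under each hypothesis.

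For part (i), I would simply observe that positive semidefiniteness is inherited by principal submatrices. Thus if $Q \succeq 0$, then $Q_{\mathbb{B}\mathbb{B}} \succeq 0$ for any index set $\mathbb{B} \subseteq \{1,\ldots,n\}$, including $\mathbb{B}(\hat x)$ for any $\hat x \in F$. Applying Lemma~\ref{exact_underests}(ii) yields the conclusion.

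For part (ii), I would invoke the second-order necessary optimality condition already recorded in the paper. Specifically, the implication \eqref{Q_BB_psd} states that every local minimizer $\hat x \in F$ of (BoxQP) satisfies $Q_{\mathbb{B}\mathbb{B}} \succeq 0$ where $\mathbb{B} = \mathbb{B}(\hat x)$. Since a global minimizer is in particular a local minimizer, both cases are covered. Applying Lemma~\ref{exact_underests}(ii) once more gives $\ell_{RS}(\hat x) = q(\hat x)$.

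There is no real obstacle here: the result is a pure corollary, and both pieces of machinery (the characterization lemma and the second-order necessary condition) are already in hand. The only thing to be a little careful about is to note explicitly that principal submatrices of a psd matrix are psd, and to point to \eqref{Q_BB_psd} for the minimizer case rather than reproving the second-order condition.
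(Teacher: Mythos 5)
Your proposal is correct and follows exactly the paper's own argument: both parts reduce to Lemma~\ref{exact_underests}(ii), with part (i) using that principal submatrices of a positive semidefinite matrix are positive semidefinite and part (ii) invoking the second-order necessary condition \eqref{Q_BB_psd}. No gaps.
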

\begin{proof}
If $Q \succeq 0$, then $Q_{\mathbb{B}\mathbb{B}} \succeq 0$ for each $\mathbb{B} \subseteq \{1,\ldots,n\}$. Therefore, both assertions follow from Lemma~\ref{exact_underests}(ii) since $Q_{\mathbb{B}\mathbb{B}} \succeq 0$ at any local or global minimizer of (BoxQP) by \eqref{Q_BB_psd}.  
\end{proof}

Corollary~\ref{implications}(i) in fact holds for SDP relaxations of general quadratic programs and a result similar to Corollary~\ref{implications}(ii) was established for general quadratic programs with a bounded feasible region~\cite{yildirim2020alternative}. We remark that Corollary~\ref{implications}(ii) presents a desirable property of the SDP-RLT relaxation, which is a necessary condition for its exactness by \eqref{conv_underest_rel_1} and \eqref{def_RS1}. However, this condition, in general, is not sufficient.

\section{Exact and Inexact RLT Relaxations} \label{Sec4}

In this section, we focus on instances of (BoxQP) that admit exact and inexact RLT relaxations. We first establish a useful property of the set of optimal solutions of RLT relaxations. Using this property, we present two equivalent but different algebraic descriptions of instances with exact RLT relaxations. By utilizing one of these descriptions, we present an algorithm for constructing instances of (BoxQP) with an exact RLT relaxation and another algorithm for constructing instances with an inexact RLT relaxation. 

\subsection{Optimal Solutions of RLT Relaxations}

In this section, we present useful properties of the set of optimal solutions of RLT relaxations. Our first result establishes the existence of a minimizer of (R1) with a very specific structure. 

\begin{proposition} \label{half_frac}
For the RLT relaxation of any instance of (BoxQP), there exists an optimal solution $\hat x \in F$ of (R1), where (R1) is given by \eqref{def_R1}, such that  
$\hat x_j \in \{0,\textstyle \frac{1}{2},1\}$ for each $j = 1,\ldots,n$.    
\end{proposition}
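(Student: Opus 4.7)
The plan is to exploit the closed-form piecewise-linear convex expression for $\ell_R$ from Lemma~\ref{piecewise-lin} together with the reduced formulation (R1) in \eqref{def_R1}. Specifically, I would (i) subdivide $F$ into polyhedral cells on each of which $\ell_R$ is affine, (ii) argue that a minimum of $\ell_R$ over $F$ can be chosen at a vertex of one of these cells, and (iii) show by a signed-graph argument that any such vertex has coordinates in $\{0,\tfrac12,1\}$.

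For step (i), I would read off the break hyperplanes of $\ell_R$ directly from \eqref{ell_R_closed}: these are $x_i=\tfrac12$ (from $\max\{0,2x_i-1\}$ when $Q_{ii}>0$), $x_i+x_j=1$ (from $\max\{0,x_i+x_j-1\}$ when $i\neq j$ and $Q_{ij}>0$), and $x_i=x_j$ (from $\min\{x_i,x_j\}$ when $i\neq j$ and $Q_{ij}<0$). Together with the box faces $x_i=0$ and $x_i=1$, these finitely many hyperplanes partition $F$ into polytope cells $C_1,\dots,C_N$, and on each $C_k$ the function $\ell_R$ agrees with a single affine function. For step (ii), since $\ell_R$ is affine on the polytope $C_k$, its minimum over $C_k$ is attained at a vertex of $C_k$; taking the best over $k$ and invoking (R1) yields an optimal $\hat x$ of (R1) that is a vertex of some $C_k$.

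For step (iii), I would apply the substitution $u_i=2x_i-1$, which converts the equations defining a vertex of $C_k$ into the types $u_i\in\{-1,0,+1\}$ (unary), $u_i+u_j=0$ (``negative'' binary), and $u_i-u_j=0$ (``positive'' binary). A vertex $\hat x$ of $C_k$ satisfies $n$ linearly independent active equations among these, and I would encode the binary equations as a signed graph on $\{1,\dots,n\}$. Analysing each connected component separately: either the component is unbalanced (has a cycle with an odd number of negative edges) and the edges alone force $\hat u_i=0$ on the component; or the component is balanced and must contain at least one active unary constraint (else a one-dimensional null space would contradict the vertex property), in which case propagation along a spanning tree gives $\hat u_i\in\{-c,+c\}$ with label $c\in\{-1,0,+1\}$. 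Either way, $\hat u_i\in\{-1,0,+1\}$ on every component, hence $\hat x_i\in\{0,\tfrac12,1\}$ for all $i$.

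The main obstacle I anticipate is the case analysis in step (iii): one must rule out the ``balanced, unlabelled'' case via the vertex property (which provides $n$ linearly independent active constraints on $\hat x$) and verify that label propagation through signed edges remains consistent with feasibility at $\hat x$. Steps (i) and (ii) then follow from Lemma~\ref{piecewise-lin} and standard polyhedral theory for minimizing affine functions on polytopes.
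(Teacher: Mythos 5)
Your proposal is correct, but it takes a genuinely different route from the paper. The paper's proof is a local perturbation argument: starting from an arbitrary optimal $\hat x$ of (R1) with an offending coordinate, it sets $\alpha=\min\{\hat x_k,1-\hat x_k\}$, groups the coordinates at levels $\alpha$ and $1-\alpha$, and moves them simultaneously along a direction $\hat d$ toward the adjacent levels $\alpha_l,\alpha_u$; it verifies term by term in \eqref{ell_R_closed} that no $\max$ or $\min$ changes its active piece on the segment, so $\ell_R$ is linear there, hence constant by optimality, and iterates until all coordinates land in $\{0,\tfrac12,1\}$ (finiteness because the number of distinct fractional levels drops). Your proof instead works globally: you subdivide $F$ by the break hyperplanes $x_i=\tfrac12$, $x_i+x_j=1$, $x_i=x_j$ into cells on which $\ell_R$ is affine, place an optimizer at a cell vertex, and then characterize all cell vertices via the substitution $u_i=2x_i-1$ and a signed-graph/balance analysis of the $n$ linearly independent active equations. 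Your step (iii) is sound: unbalanced components force $u\equiv 0$, and balanced components without a unary anchor would leave a nonzero direction in the kernel of the active system (active constraints outside the component have zero coefficients on its variables), contradicting the rank-$n$ vertex condition. Notably, your route is closer in spirit to Padberg's original argument via linearly independent active constraints --- which the paper explicitly says it is \emph{avoiding} --- but transplanted to the reduced space $\R^n$; it buys the stronger structural fact that \emph{every} vertex of the affine-piece decomposition is half-integral, while the paper's argument buys an explicit rounding path inside the optimal face starting from any given optimizer. Both establish the proposition.
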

\begin{proof}
Let $\hat x \in F$ be an optimal solution of (R1), i.e.,  $\ell^*_R =\ell_R(\hat x)$. Suppose that there exists $k \in \{1,\ldots,n\}$ such that $\hat x_k \not \in \{0,\textstyle \frac{1}{2},1\}$. We will show that one can construct another $\tilde x \in F$ such that $\ell_R(\tilde x) = \ell_R(\hat x) = \ell^*_R$ and $\tilde x_j \in \{0,\textstyle \frac{1}{2},1\}$ for each $j = 1,\ldots,n$. 

Let $\alpha = \min\{\hat x_k,1 - \hat x_k\} \in (0,\textstyle \frac{1}{2})$ and let 
\begin{eqnarray*}
\alpha_l & = & \max\left\{\left(\max\limits_{j:\min\{\hat x_j,1 - \hat x_j\}<\alpha}\min\{\hat x_j,1 - \hat x_j\}\right),0\right\}, \\
\alpha_u & = & \min\left\{\left(\min\limits_{j:\min\{\hat x_j,1 - \hat x_j\}>\alpha}\min\{\hat x_j,1 - \hat x_j\}\right),\textstyle \frac{1}{2}\right\},
\end{eqnarray*}
with the usual conventions that the minimum and the maximum over the empty set are defined to be $+\infty$ and $-\infty$, respectively. Note that $0 \leq \alpha_l < \alpha < \alpha_u \leq \textstyle \frac{1}{2}$. Let us define the following index sets:
\begin{eqnarray*}
 \mathbb{I}_1 & = & \{j \in \{1,\ldots,n\}: \hat x_j = \alpha\}, \\
 \mathbb{I}_2 & = & \{j \in \{1,\ldots,n\}: \hat x_j = 1 - \alpha\}, \\
 \mathbb{I}_3 & = & \{j \in \{1,\ldots,n\}: \hat x_j \in [0,\alpha_l] \cup [\alpha_u,1 - \alpha_u] \cup [1 - \alpha_l,1]\}.
\end{eqnarray*}
Note that $\mathbb{I}_1, \mathbb{I}_2, \mathbb{I}_3$
is a partition of the index set by the definitions of $\alpha_l$ and $\alpha_u$, and we have $k \in \mathbb{I}_1 \cup \mathbb{I}_2$. 
Let us define a direction $\hat d \in \R^n$ by 
\[
\hat d_j = \left\{\begin{array}{rr} 1, & j \in \mathbb{I}_1,\\
-1, & j \in \mathbb{I}_2,\\
0, & j \in \mathbb{I}_3.
\end{array}
\right.
\]
Consider $x^\beta = \hat x + \beta \hat d$. It is easy to verify that $x^\beta \in F$ for each $\beta \in [\alpha_l - \alpha,\alpha_u - \alpha]$. We claim that $\ell_R(x^\beta)$ is a linear function of $\beta$ on this interval. By \eqref{ell_R_closed}, it suffices to show that each term is a linear function. 

First, let us focus on the term given by $
\max\{0, x^\beta_i + x^\beta_j - 1\} = 
\max\{0, \hat{x}_i + \hat{x}_j - 1 + \beta \hat d_i + \beta \hat d_j\}$, where $i = 1,\ldots,n;~j = 1,\ldots,n$. It suffices to show that the sign of $\hat{x}_i + \hat{x}_j - 1 + \beta \hat d_i + \beta \hat d_j$ does not change for each $\beta \in [\alpha_l - \alpha,\alpha_u - \alpha]$ and for each $i = 1,\ldots,n;~j = 1,\ldots,n$. Clearly, $\hat{x}_i + \hat{x}_j - 1 + \beta \hat d_i + \beta \hat d_j = \hat{x}_i + \hat{x}_j - 1$ if $\{i,j\} \subseteq \mathbb{I}_3$; or $i \in \mathbb{I}_1, j \in \mathbb{I}_2$; or $i \in \mathbb{I}_2, j \in \mathbb{I}_1$. For the remaining cases, it follows from the definitions of $\mathbb{I}_1, \mathbb{I}_2$, and $\mathbb{I}_3$ that
\begin{footnotesize}
\[
\hat{x}_i + \hat{x}_j - 1 + \beta \hat d_i + \beta \hat d_j \in \left\{ \begin{array}{ll} [2 \alpha_l - 1, 2 \alpha_u - 1], & \{i,j\} \subseteq \mathbb{I}_1, \\ 
 {[1 - 2 \alpha_u, 1 - 2 \alpha_l]}, 
 & \{i,j\} \subseteq \mathbb{I}_2, \\
 {[\alpha_l - 1,\alpha_l + \alpha_u - 1] \cup [\alpha_l + \alpha_u - 1,0] \cup [0,\alpha_u]}, & i \in \mathbb{I}_1, j \in \mathbb{I}_3; \textrm{ or }i \in \mathbb{I}_3, j \in \mathbb{I}_1,\\
{[-\alpha_u, 0] \cup [0, 1 - \alpha_l - \alpha_u] \cup [1 - \alpha_l - \alpha_u, 1 - \alpha_l]}, & i \in \mathbb{I}_2, j \in \mathbb{I}_3; \textrm{ or }i \in \mathbb{I}_3, j \in \mathbb{I}_2.
\end{array} \right. 
\]
\end{footnotesize}
Our claim now follows from $0 \leq \alpha_l < \alpha_u \leq \textstyle \frac{1}{2}$. Therefore, $\max\{0, \hat{x}_i + \hat{x}_j - 1 + \beta \hat d_i + \beta \hat d_j\}$ is a linear function on $\beta \in [\alpha_l - \alpha,\alpha_u - \alpha]$ for each $i = 1,\ldots,n;~ j = 1,\ldots,n$.

Let us now consider the term $\min\{x^\beta_i,x^\beta_j\} = \min\{\hat{x}_i + \beta \hat d_i,\hat{x}_j + \beta \hat d_j\}$. By the choice of $\hat d$, it is easy to see that the order of the components of $x^\beta$ remains unchanged for each $\beta \in [\alpha_l - \alpha,\alpha_u - \alpha]$, i.e., if $\hat{x}_i \leq \hat x_j$, then $\hat{x}_i + \beta \hat d_i \leq \hat{x}_j + \beta \hat d_j$ for each $i = 1,\ldots,n;~ j = 1,\ldots,n$. It follows that $\min\{\hat{x}_i + \beta \hat d_i,\hat{x}_j + \beta \hat d_j\}$ is a linear function on $\beta \in [\alpha_l - \alpha,\alpha_u - \alpha]$. 

Since the third term in \eqref{ell_R_closed} is also a linear function on $\beta \in [\alpha_l - \alpha,\alpha_u - \alpha]$, it follows that $\ell_R(x^\beta)$ is a linear function on $[\alpha_l - \alpha,\alpha_u - \alpha]$. Therefore, by the optimality of $\hat x$ in (R1), $\ell_R(x^\beta)$ is a constant function on this interval. If $\alpha_l = 0$ and $\alpha_u = \textstyle \frac{1}{2}$, then the alternate optimal solution at each end of this interval satisfies the desired property. Otherwise, by moving to the solution at one of the end points with $\alpha_l > 0$ or $\alpha_u < \textstyle \frac{1}{2}$, one can repeat the same procedure in an iterative manner to arrive at an alternate optimal solution with the desired property. Note that this procedure is finite since either $\alpha_l$ strictly decreases or $\alpha_u$ strictly increases at each iteration. 
\end{proof}

We can utilize Proposition~\ref{half_frac} to obtain the following result about the set of optimal solutions of (R).

\begin{corollary} \label{opt_sol_RLT}
There exists an optimal solution $(\hat x,\hat X) \in \R^n \times \cS^n$ of the RLT relaxation (R) such that $\hat x_j \in \{0,\textstyle \frac{1}{2},1\}$ for each $j = 1,\ldots,n$ and $\hat X_{ij} \in \{0,\textstyle \frac{1}{2},1\}$ for each $i = 1,\ldots,n;~j = 1,\ldots,n$ such that 
\[
\begin{bmatrix} \hat X_{\mathbb{L}\mathbb{L}} & \hat X_{\mathbb{L}\mathbb{B}} & \hat X_{\mathbb{L}\mathbb{U}} \\ \hat X_{\mathbb{B}\mathbb{L}} & \hat X_{\mathbb{B}\mathbb{B}} & \hat X_{\mathbb{B}\mathbb{U}} \\
\hat X_{\mathbb{U}\mathbb{L}} & \hat X_{\mathbb{U}\mathbb{B}} & \hat X_{\mathbb{U}\mathbb{U}} \end{bmatrix} = \begin{bmatrix} 0 & 0 & 0 \\ 0 & \hat X_{\mathbb{B}\mathbb{B}} & \frac{1}{2} e_{\mathbb{B}} e_{\mathbb{U}}^T \\
0 & \frac{1}{2} e_{\mathbb{U}} e_{\mathbb{B}}^T & e_{\mathbb{U}} e_{\mathbb{U}}^T \end{bmatrix}, \quad \hat X_{ij} \in \{0,\textstyle \frac{1}{2}\}, \quad i \in \mathbb{B},~j \in \mathbb{B}.
\]
\end{corollary}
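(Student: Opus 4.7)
The plan is to combine Proposition~\ref{half_frac} with Lemma~\ref{feas_cond} (and the observation in \eqref{X_fixed_part}) to first fix the ``$x$-part'' of an optimal solution of (R) to have entries in $\{0,\textstyle\frac{1}{2},1\}$, and then to argue that the ``$X$-part'' can be chosen at a vertex of a box without changing the objective value.

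First, I would apply Proposition~\ref{half_frac} to obtain a point $\hat x \in F$ that is optimal for (R1) and satisfies $\hat x_j \in \{0, \textstyle \frac{1}{2}, 1\}$ for each $j$. Letting $\mathbb{L}, \mathbb{B}, \mathbb{U}$ be the index sets from \eqref{def_indL}--\eqref{def_indU}, this means $\hat x_j = 0$ for $j \in \mathbb{L}$, $\hat x_j = \textstyle\frac{1}{2}$ for $j \in \mathbb{B}$, and $\hat x_j = 1$ for $j \in \mathbb{U}$. Because (R) and (R1) are equivalent via the reduction \eqref{def_R1}, any $\hat X$ attaining the minimum in the defining optimization problem for $\ell_R(\hat x)$ yields a pair $(\hat x, \hat X)$ that is optimal for (R).

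Next, I would pin down the forced entries of $\hat X$. By \eqref{X_fixed_part}, for any $(\hat x, \hat X) \in \cF_R$ we have $\hat X_{ij} = \hat x_i \hat x_j$ whenever $i \notin \mathbb{B}$ or $j \notin \mathbb{B}$. Substituting the values of $\hat x$ on $\mathbb{L}, \mathbb{B}, \mathbb{U}$ immediately yields all the blocks asserted in the corollary except $\hat X_{\mathbb{B}\mathbb{B}}$: the $\mathbb{L}$-rows and $\mathbb{L}$-columns are zero, the $\mathbb{U}\mathbb{U}$-block is $e_{\mathbb{U}} e_{\mathbb{U}}^T$, and the $\mathbb{B}\mathbb{U}$- and $\mathbb{U}\mathbb{B}$-blocks equal $\frac{1}{2} e_{\mathbb{B}} e_{\mathbb{U}}^T$ and $\frac{1}{2} e_{\mathbb{U}} e_{\mathbb{B}}^T$, respectively.

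It remains to construct the $\mathbb{B}\mathbb{B}$-block with entries in $\{0, \textstyle\frac{1}{2}\}$. Using Lemma~\ref{feas_cond} with $\hat x_i = \hat x_j = \textstyle\frac{1}{2}$ for $i, j \in \mathbb{B}$, the McCormick bounds reduce to $0 \leq \hat X_{ij} \leq \textstyle\frac{1}{2}$ for each such pair; equivalently, on this block $\hat X$ ranges over the box $[0, \textstyle\frac{1}{2}]^{|\mathbb{B}| \times |\mathbb{B}|} \cap \cS^{|\mathbb{B}|}$. Since the only remaining part of the objective involves $\hat X_{\mathbb{B}\mathbb{B}}$ through the linear expression $\frac{1}{2}\langle Q_{\mathbb{B}\mathbb{B}}, \hat X_{\mathbb{B}\mathbb{B}}\rangle$, I would minimize it entry by entry on this box: set $\hat X_{ij} = 0$ whenever $Q_{ij} \geq 0$ and $\hat X_{ij} = \textstyle\frac{1}{2}$ whenever $Q_{ij} < 0$ for $i, j \in \mathbb{B}$. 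Symmetry is preserved because $Q$ is symmetric, and the resulting $\hat X$ still attains $\ell_R(\hat x)$, hence $(\hat x, \hat X)$ is optimal for (R) with all entries in $\{0, \textstyle \frac{1}{2}, 1\}$ and with the block structure claimed.

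There is no real obstacle here; the main point is simply to observe that once $\hat x$ has half-integral components, the feasible set for $\hat X$ in (R) decouples into a fixed part determined by \eqref{X_fixed_part} and a box for the $\mathbb{B}\mathbb{B}$-block, over which a linear objective attains its minimum at a vertex. The only small care needed is to note that both (R) and (R1) share the same optimal value, so that the entry-by-entry optimization of $\hat X_{\mathbb{B}\mathbb{B}}$ does not break optimality of $\hat x$.
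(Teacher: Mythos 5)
Your proposal is correct and follows essentially the same route as the paper: take the half-integral minimizer $\hat x$ from Proposition~\ref{half_frac} and then choose $\hat X$ entrywise at the appropriate McCormick bound according to the sign of $Q_{ij}$, so that $(\hat x,\hat X)$ attains $\ell_R(\hat x)=\ell^*_R$. If anything, your explicit split into the entries forced by \eqref{X_fixed_part} and the free $\mathbb{B}\mathbb{B}$-block is slightly more careful than the paper's single formula, which as written assigns $\hat X_{ij}=0$ when $Q_{ij}=0$ even for $i,j\in\mathbb{U}$, where feasibility forces $\hat X_{ij}=1$.
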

\begin{proof}
By Proposition~\ref{half_frac}, there exists $\hat x \in F$ such that $\ell^*_R =\ell_R(\hat x)$ and $\hat x_j \in \{0,\textstyle \frac{1}{2},1\}$ for each $j = 1,\ldots,n$. Define $\hat X \in \cS^n$ such that 
\[
\hat X_{ij} = \left\{\begin{array}{ll} \max\{0,\hat{x}_i + \hat{x}_j - 1\}, & \textrm{if}~Q_{ij} > 0, \\
\min\{\hat{x}_i,\hat{x}_j\}, & \textrm{if}~Q_{ij} < 0, \\
0, & \textrm{otherwise.} \end{array} \right.
\]
Note that $(\hat x,\hat X) \in \cF_R$ by \eqref{def_cFR_alt} and $\ell_R(\hat x) = \textstyle \frac{1}{2}\langle Q, \hat X \rangle + c^T \hat x$ by Lemma~\ref{piecewise-lin}. Therefore, $(\hat x,\hat X)$ is an optimal solution of (R) with the desired property.
\end{proof}    

The next result follows from Corollary~\ref{opt_sol_RLT}.

\begin{corollary} \label{vertices_cFR}
For each vertex $(\hat x,\hat X) \in \cF_R$, $\hat x_j \in \{0,\textstyle \frac{1}{2},1\}$ for each $j = 1,\ldots,n$ and $\hat X_{ij} \in \{0,\textstyle \frac{1}{2},1\}$ for each $i = 1,\ldots,n;~j = 1,\ldots,n$.
\end{corollary}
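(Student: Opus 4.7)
The plan is to exploit the fact that Corollary~\ref{opt_sol_RLT} guarantees, for every choice of objective data $(Q,c)$, the existence of an optimal solution of (R) with entries in $\{0,\tfrac12,1\}$. Combining this with the defining property of a vertex of a polyhedron, the corollary should follow almost immediately.

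More concretely, let $(\hat x,\hat X)\in \cF_R$ be a vertex. Since $\cF_R$ is a polyhedron in $\R^n\times\cS^n$, there exists a linear functional on $\R^n\times\cS^n$ that is uniquely minimized over $\cF_R$ at $(\hat x,\hat X)$. Any linear functional on $\R^n\times\cS^n$ can be written in the form $(x,X)\mapsto \tfrac12\langle \tilde Q, X\rangle + \tilde c^{\,T} x$ for some $(\tilde Q,\tilde c)\in \cS^n\times\R^n$ (just collect the coefficients of the entries of $x$ into $\tilde c$ and the coefficients of the entries of $X$, symmetrized, into $\tilde Q$). Hence $(\hat x,\hat X)$ is the unique optimal solution of (R) corresponding to the instance of (BoxQP) with data $(\tilde Q,\tilde c)$.

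Now apply Corollary~\ref{opt_sol_RLT} to this instance: there is some optimal solution $(x^\star,X^\star)$ of (R) whose $x$-entries lie in $\{0,\tfrac12,1\}$ and whose $X$-entries, based on the block structure displayed there, also lie in $\{0,\tfrac12,1\}$. By uniqueness of $(\hat x,\hat X)$ as an optimal solution, we must have $(x^\star,X^\star)=(\hat x,\hat X)$, which gives the claimed conclusion.

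The only potential subtlety I see is ensuring that the ``symmetrization'' step really does express every linear functional on $\R^n\times\cS^n$ as $\tfrac12\langle Q,X\rangle+c^Tx$ for some $(Q,c)\in \cS^n\times\R^n$; this is routine since $\cS^n$ is self-dual under the trace inner product and the map $Q\mapsto \tfrac12\langle Q,\cdot\rangle$ is a linear bijection onto the dual of $\cS^n$. Beyond that, the argument is essentially a one-line reduction to Corollary~\ref{opt_sol_RLT}, so I do not anticipate any genuine obstacle.
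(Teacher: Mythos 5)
Your proof is correct and takes essentially the same approach as the paper's: the paper likewise characterizes a vertex of the polytope $\cF_R$ as the unique optimal solution of (R) for some $(Q,c)\in\cS^n\times\R^n$ and then invokes Corollary~\ref{opt_sol_RLT}. The only difference is that you spell out the routine step of rewriting an arbitrary linear functional on $\R^n\times\cS^n$ in the form $\tfrac12\langle Q,X\rangle + c^Tx$, which the paper leaves implicit.
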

\begin{proof}
Since $\cF_R$ is a polytope, $(\hat x,\hat X) \in \cF_R$ is a vertex if and only if there exists a $(Q,c) \in \cS^n \times \R^n$ such that $(\hat x,\hat X) \in \cF_R$ is the unique optimal solution of (R). The assertion follows from Corollary~\ref{opt_sol_RLT}.  
\end{proof}

We remark that Padberg~\cite{padberg1989boolean} established a similar result for the set $\cF_R^-$ given by \eqref{def_cFR-}. Corollary~\ref{vertices_cFR} extends the same result to $\cF_R$. In contrast with the proof of~\cite{padberg1989boolean}, which relies on linearly independent active constraints, our proof uses a specific property of the set of optimal solutions of the reduced formulation (R1). 

\subsection{First Description of Exact RLT Relaxations}

In this section, we present our first description of the set of instances of (BoxQP) with an exact RLT relaxation. We start with a useful property of such instances.

\begin{proposition} \label{exact-rlt}
For any instance of (BoxQP), the RLT relaxation is exact, i.e., $\ell^*_R = \ell^*$, if and only if there exists a vertex $v \in F$ such that $v$ is an optimal solution of (R1), where (R1) is given by \eqref{def_R1}.
\end{proposition}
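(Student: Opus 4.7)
The plan is to argue both directions directly, using the properties of $\ell_R$ that were just established, together with the KKT conditions from Section~\ref{Sec2}.

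For the easy direction ($\Leftarrow$), I would suppose $v \in F$ is a vertex and an optimal solution of (R1). By Corollary~\ref{vertex-prop}, $\cF_R(v) = \{(v,vv^T)\}$, and so $\ell_R(v) = \tfrac{1}{2}\langle Q, vv^T\rangle + c^T v = q(v)$. Since $v \in F$ we have $q(v) \geq \ell^*$, so $\ell^*_R = \ell_R(v) = q(v) \geq \ell^*$. Combined with the universal bound $\ell^*_R \leq \ell^*$ from \eqref{convex_rels}, this gives $\ell^*_R = \ell^*$.

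For the nontrivial direction ($\Rightarrow$), assume $\ell^*_R = \ell^*$ and let $\hat x \in F$ be any global minimizer of (BoxQP), so $q(\hat x) = \ell^*$. From \eqref{conv_underest_rel_1} and (R1), I get $\ell^*_R \leq \ell_R(\hat x) \leq q(\hat x) = \ell^* = \ell^*_R$, hence $\ell_R(\hat x) = q(\hat x) = \ell^*$. Applying Lemma~\ref{exact_underests}(i), either $\hat x$ is a vertex of $F$ (in which case $v = \hat x$ works) or $Q_{\mathbb{B}\mathbb{B}} = 0$, where $\mathbb{B} = \mathbb{B}(\hat x)$.

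The remaining step is to handle the case $Q_{\mathbb{B}\mathbb{B}} = 0$, which is the main obstacle. Here I would use the KKT conditions \eqref{kkt1}--\eqref{kkt5} at $\hat x$: since $\hat u_{\mathbb{B}} = 0$ and $\hat v_{\mathbb{B}} = 0$, we obtain $(Q\hat x + c)_{\mathbb{B}} = 0$. Combining this with $Q_{\mathbb{B}\mathbb{B}} = 0$ shows that for any $d \in \R^n$ with $d_{\mathbb{L}} = 0$ and $d_{\mathbb{U}} = 0$,
\[
q(\hat x + d) = q(\hat x) + (Q\hat x + c)_{\mathbb{B}}^T d_{\mathbb{B}} + \tfrac{1}{2} d_{\mathbb{B}}^T Q_{\mathbb{B}\mathbb{B}} d_{\mathbb{B}} = q(\hat x),
\]
so $q$ is constant on the face of $F$ obtained by fixing $x_{\mathbb{L}} = 0$ and $x_{\mathbb{U}} = e$. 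I then define $v \in F$ by $v_{\mathbb{L}} = 0$, $v_{\mathbb{U}} = e$, and $v_{\mathbb{B}} \in \{0,1\}^{|\mathbb{B}|}$ (say $v_{\mathbb{B}} = 0$); then $v$ is a vertex of $F$ with $q(v) = q(\hat x) = \ell^*$. By Corollary~\ref{vertex-prop}, $\ell_R(v) = q(v) = \ell^* = \ell^*_R$, so $v$ is an optimal solution of (R1), as required.
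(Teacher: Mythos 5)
Your proof is correct and follows essentially the same route as the paper: both directions rest on Lemma~\ref{exact_underests}(i), and the nontrivial case $Q_{\mathbb{B}\mathbb{B}} = 0$ is resolved via the KKT identity $(Q\hat x + c)_{\mathbb{B}} = 0$ to show that rounding the fractional components to a vertex preserves the objective value. The only cosmetic difference is that you phrase this as constancy of $q$ on the face $x_{\mathbb{L}} = 0$, $x_{\mathbb{U}} = e$, whereas the paper computes $q(x^*)$ explicitly and matches it to $q(v)$.
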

\begin{proof}
Suppose that $\ell^*_R = \ell^*$. Then, by \eqref{conv_underest_rel_1} and \eqref{def_R1}, for any optimal solution $x^* \in F$ of (BoxQP), we have $q(x^*) = \ell^* = \ell^*_R \leq \ell_R(x^*) \leq q(x^*) = \ell^*$, which implies that $\ell^*_R = \ell_R(x^*) = q(x^*)$.
By Lemma~\ref{exact_underests}(i), either $x^*$ is a vertex of $F$, in which case, we are done, or $Q_{\mathbb{B}\mathbb{B}} = 0$, where $\mathbb{B}$ is given by \eqref{def_indB}. In the latter case, since $\hat x^*_{\mathbb{L}} = 0$ and $\hat x^*_{\mathbb{U}} = e_\mathbb{U}$ by \eqref{def_indL} and \eqref{def_indU}, respectively, 
we obtain
\[
\ell^*_R = \ell^* = q(x^*) = \frac{1}{2} e_\mathbb{U}^T Q_{\mathbb{U}\mathbb{U}} e_\mathbb{U} + (x^*_{\mathbb{B}})^T Q_{\mathbb{B}\mathbb{U}} e_\mathbb{U} + c_{\mathbb{U}}^T e_\mathbb{U} + c_{\mathbb{B}}^T x^*_{\mathbb{B}} = \frac{1}{2} e_\mathbb{U}^T Q_{\mathbb{U}\mathbb{U}} e_\mathbb{U} + c_{\mathbb{U}}^T e_\mathbb{U},
\]
where the last equality follows from the identity $Q_{\mathbb{B}\mathbb{U}} e_\mathbb{U} + c_{\mathbb{B}} = 0$ by \eqref{kkt1}--\eqref{kkt3}. Therefore, for the vertex $v \in F$ given by $v_j = 1$ for each $j \in \mathbb{U}$ and $v_j = 0$ for each $j \in \mathbb{L} \cup \mathbb{B} = 0$, we obtain $q(v) = \ell^* = \ell_R(v)$ by Lemma~\ref{exact_underests}(i).

Conversely, if there exists a vertex $v \in F$ such that $\ell_R(v) = \ell^*_R$, then we have $\ell^* \leq q(v) = \ell_R(v) = \ell^*_R$ by Lemma~\ref{exact_underests}(i). The assertion follows from \eqref{convex_rels}.   
\end{proof}

Proposition~\ref{exact-rlt} presents an important property of the set of instances of (BoxQP) with exact RLT relaxations in terms of the set of optimal solutions and gives rise to the following corollary.

\begin{corollary} \label{vertex-optimality}
For any instance of (BoxQP), the RLT relaxation is exact if and only if there exists a vertex $v \in F$ such that $(v,vv^T)$ is an optimal solution of (R). Furthermore, in this case, $v$ is an optimal solution of (BoxQP).   
\end{corollary}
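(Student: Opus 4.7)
The plan is to obtain Corollary~\ref{vertex-optimality} as a direct consequence of Proposition~\ref{exact-rlt} together with Corollary~\ref{vertex-prop}, exploiting the equivalence of the lifted formulation (R) and the reduced formulation (R1). The key observation is that over a vertex $v \in F$ the fiber $\cF_R(v)$ collapses to the single point $(v,vv^T)$, so the convex underestimator agrees with the original objective there, and any vertex minimizer of (R1) lifts uniquely to a minimizer of (R) with the advertised form.

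For the forward direction, suppose $\ell^*_R = \ell^*$. Proposition~\ref{exact-rlt} supplies a vertex $v \in F$ that is optimal in (R1), i.e., $\ell_R(v) = \ell^*_R$. By Corollary~\ref{vertex-prop}, the only $(x,X) \in \cF_R$ with $x = v$ is $(v,vv^T)$; hence $\ell_R(v) = \tfrac12 \langle Q, vv^T\rangle + c^T v = q(v)$. Combined with the equivalence $\ell^*_R = \min_{x \in F} \ell_R(x)$ from (R1), this means that $(v,vv^T) \in \cF_R$ attains the optimal value of (R) and is therefore optimal.

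For the converse, assume there is a vertex $v \in F$ with $(v,vv^T)$ optimal in (R). A direct computation gives $\ell^*_R = \tfrac12 \langle Q, vv^T\rangle + c^T v = q(v)$. Since $v \in F$ is feasible for (BoxQP), we have $\ell^* \leq q(v) = \ell^*_R$; combining with $\ell^*_R \leq \ell^*$ from \eqref{convex_rels} forces $\ell^*_R = \ell^* = q(v)$. This simultaneously yields exactness of the RLT relaxation and the ``furthermore'' claim that $v$ is an optimal solution of (BoxQP).

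I do not anticipate any real obstacle here: the substantive content sits in Proposition~\ref{exact-rlt}, which was proved via the reduced formulation (R1) and Lemma~\ref{exact_underests}, and in Corollary~\ref{vertex-prop}, which pins down the trivial lift at vertices. The only care needed is to pass cleanly between the (R)/(R1) viewpoints using $\ell^*_R = \min_{x \in F} \ell_R(x)$ and the identification $\ell_R(v) = q(v)$ at vertices.
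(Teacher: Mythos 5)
Your argument is correct and follows essentially the same route as the paper, which derives the corollary directly from Proposition~\ref{exact-rlt}, Lemma~\ref{exact_underests}(i), and Corollary~\ref{vertex-prop}. Your only cosmetic difference is that you obtain the identity $\ell_R(v) = q(v)$ at a vertex from the collapse of the fiber $\cF_R(v)$ to $\{(v,vv^T)\}$ rather than citing Lemma~\ref{exact_underests}(i), which is an equivalent observation.
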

\begin{proof}
The assertion directly follows from Proposition~\ref{exact-rlt}, Lemma~\ref{exact_underests}(i), and Corollary~\ref{vertex-prop}.    
\end{proof}

By Corollary~\ref{vertex-optimality}, if the set of optimal solutions of (BoxQP) does not contain a vertex, then the RLT relaxation is inexact. Note that if $q(\cdot)$ is a concave function, then the set of optimal solutions of (BoxQP) contains at least one vertex. However, this is an NP-hard problem (see, e.g.,~\cite{Sahni74}), which implies that the RLT relaxation can be inexact even if the set of optimal solutions of (BoxQP) contains at least one vertex. The next example illustrates that the RLT relaxation can be inexact even if every optimal solution of (BoxQP) is a vertex.

\begin{example} \label{Ex_1}
Consider an instance of (BoxQP) with
\[
Q = \begin{bmatrix}
-1&-2\\-2&1
\end{bmatrix}, \quad c = \begin{bmatrix}
1 \\ 1
\end{bmatrix}.
\]
Note that
\[
q(x) = \frac{1}{2}\left(-x_1^2 + x_2^2 - 4 x_1 x_2\right) + x_1 + x_2 = \frac{1}{2}\left(x_2 - x_1\right)^2 + (x_1 + x_2) (1 - x_1),
\]
which implies that $q(x) \geq 0$ for each $x \in F$ and $q(x) = 0$ if and only if $x \in \{0,e\}$. Therefore, $\ell^* = 0$ and the set of optimal solution is given by $\{0,e\}$, which consists of two vertices of $F$. However, for $\hat x = \textstyle \frac{1}{2}e \in \R^2$, it is easy to verify that $\ell_R(\hat x) = -1/4 < \ell^*$. In fact, $\ell^*_R = \ell_R(\hat x) = -1/4$ and $\ell_R(x) > \ell_R(\hat x)$ for each $x \in F \backslash \{\hat x\}$. Figure~\ref{inexact_Ex1} illustrates the functions $q(\cdot)$ and $\ell_R(\cdot)$. 
\end{example}

\begin{figure}[hbt]
\label{inexact_Ex1}
\includegraphics[scale=0.4]{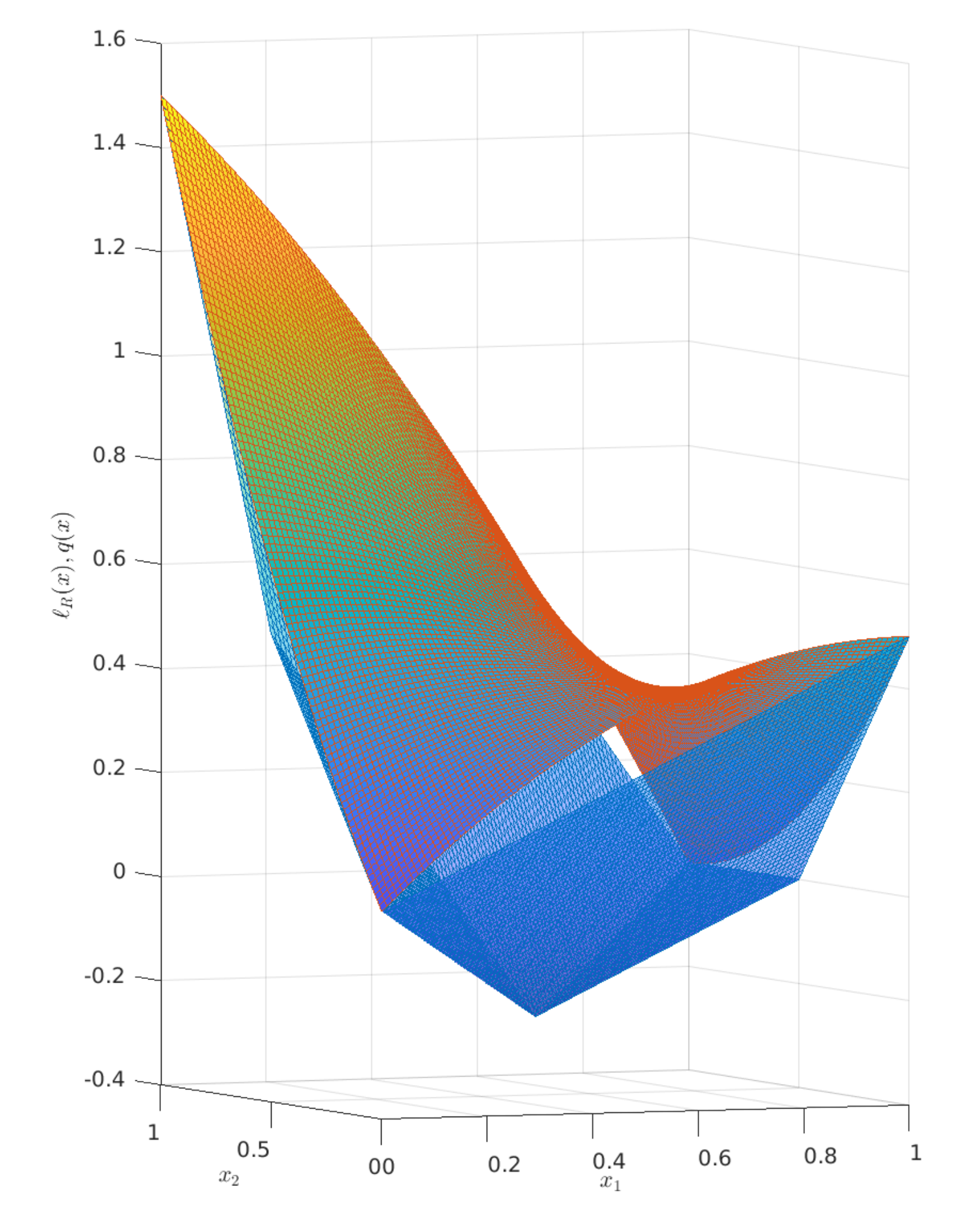}
\centering
\caption{Graphs of $q(\cdot)$ (above) and $\ell_R(\cdot)$ (below) for Example~\ref{Ex_1}}
\end{figure}

We next present our first description of the set of instances that admit an exact RLT relaxation. 
To that end, let us define
\begin{equation} \label{exact_RLT_set}
\cE_R = \{(Q,c) \in \cS^n \times \R^n: \ell^* = \ell^*_R\}.    
\end{equation}
i.e., $\cE_R$ denotes the set of all instances of (BoxQP) that admit an exact RLT relaxation. 
For a given $\hat x \in F$, let us define
\begin{equation} \label{def_cER_x}
 \cO_R(\hat x) = \{(Q,c) \in \cS^n \times \R^n: \ell_R(\hat x) = \ell^*_R\} = \{(Q,c) \in \cS^n \times \R^n: \ell_R(\hat x) \leq \ell_R(x), \quad x \in F\},   
\end{equation}
i.e., $\cO_R(\hat x)$ denotes the set of instances of (BoxQP) such that $\hat x$ is a minimizer of (R1), where (R1) is given by \eqref{def_R1}. 
By \eqref{ell_R_closed}, it is easy to see that $\cO_R(\hat x)$ is a convex cone for each $\hat x \in F$. Our next result provides an algebraic description of the set $\cE_R$.

\begin{proposition} \label{exact_RLT_char1}
Let $V \subset F$ denote the set of vertices of $F$ given by \eqref{vertices of box QP} and let $V^+ = \{x \in F: x_j \in \{0,\textstyle \frac{1}{2},1\}, \quad j = 1,\ldots,n\}$. Then, $\cE_R$ defined as in \eqref{exact_RLT_set} is given by the union of a finite number of polyhedral cones and admits the following description:
\begin{equation} \label{exact_RLT_desc1}
 \cE_R = \bigcup_{v \in V} \cO_R(v) = \bigcup_{v \in V} \left(\bigcap_{\hat x \in V^+} \left\{(Q,c) \in \cS^n \times \R^n: \ell_R(v) \leq \ell_R(\hat x)\right\} \right).  
\end{equation}
\end{proposition}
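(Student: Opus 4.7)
The proof decomposes into two moves: establishing the two claimed equalities and then showing each $\cO_R(v)$ is a polyhedral cone. I would argue them in this order.

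The first equality $\cE_R=\bigcup_{v\in V}\cO_R(v)$ is essentially a restatement of Corollary~\ref{vertex-optimality}: $(Q,c)\in\cE_R$ iff $\ell^*=\ell_R^*$, which by Corollary~\ref{vertex-optimality} is equivalent to the existence of a vertex $v\in V$ with $\ell_R(v)=\ell_R^*$, which by the first expression in \eqref{def_cER_x} says $(Q,c)\in\cO_R(v)$ for some $v\in V$. For the inner description, fix $v\in V$. The inclusion $\cO_R(v)\subseteq\bigcap_{\hat x\in V^+}\{(Q,c):\ell_R(v)\le\ell_R(\hat x)\}$ is immediate from the second expression in \eqref{def_cER_x}, since $V^+\subseteq F$. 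For the reverse, I would invoke Proposition~\ref{half_frac}, which guarantees that (R1) always attains its minimum on $V^+$, so $\ell_R^*=\min_{\hat x\in V^+}\ell_R(\hat x)$. Any $(Q,c)$ in the right-hand intersection then satisfies $\ell_R(v)\le\ell_R^*$, and combined with the trivial bound $\ell_R(v)\ge\ell_R^*$ coming from $v\in F$, equality holds, placing $(Q,c)$ in $\cO_R(v)$.

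For the polyhedral cone structure, I would combine Corollary~\ref{vertex-prop}, which gives $\ell_R(v)=q(v)=\tfrac12 v^TQv+c^Tv$ (linear and homogeneous in $(Q,c)$), with the observation that for each $\hat x\in V^+$ the $X$-section of $\cF_R(\hat x)$ is an axis-aligned box in $\cS^n$ whose finitely many extreme points $X^{(1)},\ldots,X^{(N)}$ yield the representation
\[
\ell_R(\hat x)=\min_{1\le k\le N}\Bigl(\tfrac12\langle Q,X^{(k)}\rangle+c^T\hat x\Bigr).
\]
Hence $\ell_R(v)\le\ell_R(\hat x)$ is equivalent to the finite system of homogeneous linear inequalities $\tfrac12 v^T Q v+c^T v\le \tfrac12\langle Q,X^{(k)}\rangle+c^T\hat x$ for $k=1,\dots,N$, so $\{(Q,c):\ell_R(v)\le\ell_R(\hat x)\}$ is a polyhedral cone. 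Intersecting over the finite set $V^+$ keeps $\cO_R(v)$ a polyhedral cone, and taking the union over the finite set $V$ realizes $\cE_R$ as a finite union of polyhedral cones.

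The main subtlety lies in this last step: rather than using the piecewise sign-based closed form of Lemma~\ref{piecewise-lin}, which would force a case analysis over the orthants of $Q$ and obscure the cone structure, I would pass to the ``minimum over extreme points'' dual form of $\ell_R(\hat x)$, which makes each atomic inequality $\ell_R(v)\le\ell_R(\hat x)$ transparently a finite conjunction of homogeneous linear inequalities in $(Q,c)$.
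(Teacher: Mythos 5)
Your proof is correct and follows the same skeleton as the paper's: the first equality comes from the characterization of exactness via an optimal vertex (the paper cites Proposition~\ref{exact-rlt}, you cite its consequence Corollary~\ref{vertex-optimality}), and the second equality comes from Proposition~\ref{half_frac}, exactly as in the paper. The one place where you genuinely diverge is the polyhedrality argument. The paper disposes of it in one line by asserting that $\ell_R(x)$ is a \emph{linear} function of $(Q,c)$ for each fixed $x$; strictly speaking, the closed form in Lemma~\ref{piecewise-lin} shows that $\ell_R(\hat x)$ is concave piecewise linear in $Q$ (the coefficient of $Q_{ij}$ switches between $\max\{0,\hat x_i+\hat x_j-1\}$ and $\min\{\hat x_i,\hat x_j\}$ according to the sign of $Q_{ij}$), and it is genuinely linear only at points where these coincide, e.g., at a vertex $v$, where $\ell_R(v)=q(v)$. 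Your route --- writing $\ell_R(\hat x)$ as the minimum of the finitely many linear homogeneous functions $(Q,c)\mapsto\tfrac12\langle Q,X^{(k)}\rangle+c^T\hat x$ indexed by the extreme points of the box $\cF_R(\hat x)$, so that $\ell_R(v)\le\ell_R(\hat x)$ becomes a finite conjunction of homogeneous linear inequalities --- supplies the justification the paper elides: an inequality of the form (linear) $\le$ (concave piecewise linear) cuts out an intersection of half-spaces, which is what makes each $\cO_R(v)$ a polyhedral cone. So your argument is a more careful version of the same proof rather than a different one, and it in fact patches a minor imprecision in the paper's final sentence.
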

\begin{proof}
By Proposition~\ref{exact-rlt}, the RLT relaxation is exact if and only if there exists a vertex $v \in F$ such that $\ell_R(v) = \ell^*_R$,  
which, together with \eqref{def_cER_x}, leads to the first equality in \eqref{exact_RLT_desc1}. By Proposition~\ref{half_frac}, the set $V_+$ contains at least one minimizer of $\ell_R(\cdot)$, 
which implies the second equality in \eqref{exact_RLT_desc1}. $\cE_R$ is the union of a finite number of polyhedral cones since $\ell_R(x)$ is a linear function of $(Q,c)$ for each fixed $x \in F$ by Lemma~\ref{piecewise-lin} and $V^+$ is a finite set.
\end{proof}

For each $v \in V$, we remark that Proposition~\ref{exact_RLT_char1} gives a description of $\cO_R(v)$ using $3^n$ linear inequalities since $|V^+| = 3^n$. In fact, for each $v \in V$, the convexity of $\ell_R(\cdot)$ on $F$ implies that it suffices to consider only those $\hat x \in V_+$ such that $\hat x_j \in \{0,\textstyle \frac{1}{2}\}$ for $j \in \mathbb{L}(v)$ and $\hat x_j \in \{\textstyle \frac{1}{2},1\}$ for $j \in \mathbb{U}(v)$, where $\mathbb{L}(v)$ and $\mathbb{U}(v)$ are given by \eqref{def_indL} and \eqref{def_indU}, respectively, which implies a simpler description  of $\cE_R$ with $2^n$ inequalities. Due to the exponential number of such inequalities, this description is not very useful for efficiently checking if a particular instance of (BoxQP) admits an exact RLT relaxation. Similarly, this description cannot be used easily for constructing such an instance of (BoxQP). In the next section, we present an alternative description of $\cE_R$ using linear programming duality, which gives rise to algorithms for efficiently constructing instances of (BoxQP) with exact or inexact RLT relaxations.

\subsection{An Alternative Description of Exact RLT Relaxations} \label{another_char_RLT}

In this section, our main goal is to present an alternative description of the set $\cE_R$ using duality. 

Recall that the RLT relaxation is given by
\[
\textrm{(R)} \quad \ell^*_R = \min\limits_{(x,X) \in \R^n \times \cS^n} \left\{\frac{1}{2}\langle Q, X\rangle + c^T x: (x,X) \in \cF_R\right\},
\]
where, $\cF_R$ given by \eqref{def_cFR_alt}, can be expressed in the following form:
\begin{equation} \label{def_cFR}
\cF_R = \left\{(x,X) \in \R^n \times \cS^n: \begin{array}{rcl} x & \leq & e\\x & \geq & 0\\X - xe^T - ex^T + ee^T & \geq & 0\\
-X + ex^T & \geq & 0\\
X & \geq & 0
\end{array}
\right\}.
\end{equation}

By defining dual variables $(u,v,W,Y,Z) \in \R^n \times \R^n \times \cS^n \times \R^{n \times n} \times \cS^n$ corresponding to each of the five constraints in \eqref{def_cFR}, respectively, the dual problem of (R) is given by
\[
\begin{array}{llrcl}
\textrm{(R-D)} & \max\limits_{(u,v,W,Y,Z) \in \R^n \times \R^n \times \cS^n \times \R^{n \times n} \times \cS^n} & -e^T u - \frac{1}{2} e^T W e & & \\
 & \textrm{s.t.} & & & \\
 & & -u + v - W e + Y^T e & = & c \\
 & & W - Y - Y^T + Z & = & Q \\
 & & u & \geq & 0 \\
 & & v & \geq & 0 \\
 & & W & \geq & 0 \\
 & & Y & \geq & 0 \\
 & & Z & \geq & 0.
\end{array}
\]

Note that the variables $(W,Y,Z) \in \cS^n \times \R^{n \times n} \times \cS^n$ in (R-D) are scaled by a factor of $\textstyle \frac{1}{2}$. First, we start with optimality conditions for (R) and (R-D). 

\begin{lemma} \label{opt-cond-R-RD}
$(\hat x, \hat X) \in \cF_R$ is an optimal solution of (R) if and only if there exists $(\hat u,\hat v,\hat W,\hat Y,\hat Z) \in \R^n \times \R^n \times \cS^n \times \R^{n \times n} \times \cS^n$ such that
\begin{eqnarray}
Q & = & \hat W - \hat Y - \hat Y^T + \hat Z \label{RLTcond00}\\
c & = & -\hat u + \hat v - \hat W e + \hat Y^T e \label{RLTcond01} \\
 \hat u^T (e - \hat x) & = & 0 \label{RLTcond1}\\
 \hat v^T \hat x & =& 0 \label{RLTcond2}\\
 \langle \hat W, \hat X - \hat x e^T - e \hat x^T + ee^T \rangle & = & 0 \label{RLTcond3}\\
 \langle \hat Y, e \hat x^T - \hat X \rangle & = & 0 \label{RLTcond4}\\
 \langle \hat Z, \hat X \rangle & = & 0 \label{RLTcond5} \\
 \hat u & \geq & 0 \label{RLTcond6} \\
\hat v & \geq & 0 \label{RLTcond7} \\
\hat W & \geq & 0 \label{RLTcond8} \\
\hat Y & \geq & 0 \label{RLTcond9} \\
\hat Z & \geq & 0. \label{RLTcond10}
\end{eqnarray}
\end{lemma}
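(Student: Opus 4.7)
The plan is to recognize (R) as a linear program with a non-empty and bounded feasible region and apply standard LP strong duality and complementary slackness. The claimed equivalence will follow with \eqref{RLTcond00}--\eqref{RLTcond01} and \eqref{RLTcond6}--\eqref{RLTcond10} amounting to dual feasibility of $(\hat u, \hat v, \hat W, \hat Y, \hat Z)$, and \eqref{RLTcond1}--\eqref{RLTcond5} encoding complementary slackness between each of the five primal inequality blocks defining $\cF_R$ in \eqref{def_cFR} and the corresponding dual multiplier.

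First I would verify that (R-D) is indeed the LP dual of (R) in the stated form. Starting from \eqref{def_cFR}, I assign multipliers $u, v \in \R^n$ to $x \leq e$ and $x \geq 0$, a symmetric $W \in \cS^n$ to $X - xe^T - ex^T + ee^T \geq 0$, a general matrix $Y \in \R^{n\times n}$ to $-X + ex^T \geq 0$ (the symmetry of $Y$ is not forced since $ex^T$ is not symmetric), and a symmetric $Z \in \cS^n$ to $X \geq 0$, all required to be entrywise non-negative. Forming the Lagrangian
\[
\cL = \tfrac{1}{2}\langle Q, X\rangle + c^Tx - u^T(e-x) - v^Tx - \langle W, X - xe^T - ex^T + ee^T\rangle - \langle Y, ex^T - X\rangle - \langle Z, X\rangle,
\]
using $\langle W, xe^T + ex^T\rangle = 2 x^T W e$ (symmetry of $W$) and the symmetrization $\langle Y, X\rangle = \langle \tfrac{1}{2}(Y + Y^T), X\rangle$ for $X \in \cS^n$, the coefficients of $x$ and of $X$ must vanish for $\inf_{x,X} \cL$ to be finite. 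After absorbing a factor $\tfrac{1}{2}$ into $W$ and $Z$ (the scaling convention noted in the paragraph immediately after the statement of (R-D)), these two coefficient equations become exactly \eqref{RLTcond01} and \eqref{RLTcond00}, and the residual constant term reproduces the dual objective $-e^Tu - \tfrac{1}{2} e^T W e$.

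Second, I would invoke LP strong duality. The set $\cF_R$ is non-empty, since it contains $(0,0)$, and bounded, since \eqref{def_cFR_alt} forces every entry of $x$ and $X$ to lie in $[0,1]$; consequently (R) attains a finite optimal value, and so does (R-D) with equal optimal values. The lemma is then the standard KKT characterization for a linear program: a primal-feasible $(\hat x, \hat X) \in \cF_R$ is optimal for (R) if and only if there exists $(\hat u, \hat v, \hat W, \hat Y, \hat Z)$ satisfying dual feasibility \eqref{RLTcond00}--\eqref{RLTcond01} and \eqref{RLTcond6}--\eqref{RLTcond10} together with the complementary slackness relations \eqref{RLTcond1}--\eqref{RLTcond5} between the five primal inequality blocks and their respective multipliers.

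The only obstacle worth flagging is bookkeeping: the $\tfrac{1}{2}$ factor in the primal objective $\tfrac{1}{2}\langle Q, X\rangle$, the symmetry of $X \in \cS^n$ (which forces $Y$ to appear as $Y + Y^T$ in the stationarity condition for $X$), and the convention that the symmetric dual matrices $W$ and $Z$ absorb a $\tfrac{1}{2}$ (so that (R-D) takes the displayed form) must all be tracked consistently. Once they are, the proof reduces to a direct application of LP strong duality and complementary slackness.
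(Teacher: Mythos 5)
Your proposal is correct and follows exactly the route the paper takes: the paper's own proof is a one-line appeal to LP strong duality between (R) and (R-D), and your argument simply fills in the standard details (dual derivation, boundedness and nonemptiness of $\cF_R$, complementary slackness) that the paper leaves implicit. No discrepancy to report.
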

\begin{proof}
The assertion follows from strong duality since each of (R) and (R-D) is a linear programming problem.
\end{proof}

Lemma~\ref{opt-cond-R-RD} gives rise to an alternative description of the set of instances of (BoxQP) that admit an exact RLT relaxation.

\begin{corollary} \label{alt_char_exact_RLT}
$(Q,c) \in \cE_R$, where $\cE_R$ is defined as in \eqref{exact_RLT_set}, if and only if there exists a vertex $v \in F$ and there exists $(\hat u,\hat v,\hat W,\hat Y,\hat Z) \in \R^n \times \R^n \times \cS^n \times \R^{n \times n} \times \cS^n$ such that the relations \eqref{RLTcond00}--\eqref{RLTcond10} hold, where $(\hat x,\hat X) = (v,vv^T)$.    
\end{corollary}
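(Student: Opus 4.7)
The plan is to obtain Corollary~\ref{alt_char_exact_RLT} as a direct combination of two earlier results: the vertex-optimality characterization in Corollary~\ref{vertex-optimality} and the strong-duality optimality conditions for (R)/(R-D) stated in Lemma~\ref{opt-cond-R-RD}. The underlying idea is that ``exact RLT'' is equivalent to the existence of an optimal primal solution of (R) of the specific form $(v,vv^T)$ with $v$ a vertex, and then the KKT relations \eqref{RLTcond00}--\eqref{RLTcond10} simply encode the optimality of this particular primal point via the dual (R-D).

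For the forward direction, I would start from $(Q,c)\in\cE_R$, i.e., $\ell^*=\ell^*_R$. By Corollary~\ref{vertex-optimality} there exists a vertex $v\in F$ such that $(v,vv^T)$ is an optimal solution of (R). Applying Lemma~\ref{opt-cond-R-RD} to this optimal primal point produces dual multipliers $(\hat u,\hat v,\hat W,\hat Y,\hat Z)\in\R^n\times\R^n\times\cS^n\times\R^{n\times n}\times\cS^n$ satisfying \eqref{RLTcond00}--\eqref{RLTcond10} with $(\hat x,\hat X)=(v,vv^T)$, which is exactly the claimed condition.

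For the converse, suppose a vertex $v\in F$ and a tuple $(\hat u,\hat v,\hat W,\hat Y,\hat Z)$ satisfying \eqref{RLTcond00}--\eqref{RLTcond10} with $(\hat x,\hat X)=(v,vv^T)$ are given. I first need primal feasibility of $(v,vv^T)$ for (R); this follows from Corollary~\ref{vertex-prop}, which asserts $\cF_R(v)=\{(v,vv^T)\}$, so in particular $(v,vv^T)\in\cF_R$. With this in hand, Lemma~\ref{opt-cond-R-RD} tells me $(v,vv^T)$ is optimal for (R), hence
\[
\ell^*_R \;=\; \tfrac{1}{2}\langle Q,vv^T\rangle+c^Tv \;=\; q(v) \;\geq\; \ell^*.
\]
Combining with $\ell^*_R\leq\ell^*$ from \eqref{convex_rels} yields $\ell^*_R=\ell^*$, i.e., $(Q,c)\in\cE_R$.

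There is essentially no hard step: both Corollary~\ref{vertex-optimality} and Lemma~\ref{opt-cond-R-RD} are already established, so the work reduces to splicing them together and invoking Corollary~\ref{vertex-prop} to recover primal feasibility of $(v,vv^T)$ in the converse. The only point requiring a little care is making sure the primal--dual optimality system in Lemma~\ref{opt-cond-R-RD} is applied to the specific lifted point $(v,vv^T)$ rather than an arbitrary optimal $(\hat x,\hat X)\in\cF_R$, which is exactly what Corollary~\ref{vertex-optimality} licenses.
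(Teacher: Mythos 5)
Your proposal is correct and follows essentially the same route as the paper: the paper also obtains the corollary by combining Corollary~\ref{vertex-optimality} (exactness iff some $(v,vv^T)$ with $v$ a vertex is optimal for (R)) with the primal--dual optimality conditions of Lemma~\ref{opt-cond-R-RD}. Your converse direction merely spells out the primal-feasibility and inequality details that the paper leaves implicit.
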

\begin{proof}
Note that $(Q,c) \in \cE_R$ if and only if there exists a vertex $v \in F$ such that $(v,vv^T)$ is an optimal solution of (R) by Corollary~\ref{vertex-optimality}. The assertion now follows from Lemma~\ref{opt-cond-R-RD}.   
\end{proof}

In the next section, we discuss how Corollary~\ref{alt_char_exact_RLT} can be utilized to construct instances of (BoxQP) with exact and inexact RLT relaxations. 

\subsection{Construction of Instances with Exact RLT Relaxations}

In this section, we describe an algorithm for constructing instances of (BoxQP) with an exact RLT relaxation. Algorithm~\ref{Alg1} is based on designating a vertex $v \in F$ and constructing an appropriate dual feasible solution that satisfies optimality conditions together with $(v,vv^T) \in \cF_R$.

\begin{algorithm}
\begin{algorithmic}[1]
\Require $n; \mathbb{L} \subseteq \{1,\ldots,n\}$
\Ensure $(Q,c) \in \cE_R$
\State $\mathbb{U} \gets \{1,\ldots,n\} \backslash \mathbb{L}$
\State Choose an arbitrary $\hat u \in \R^n$ such that $\hat u \geq 0$ and $\hat u_\mathbb{L} = 0$.
\State Choose an arbitrary $\hat v \in \R^n$ such that $\hat v \geq 0$ and $\hat v_\mathbb{U} = 0$.
\State Choose an arbitrary $\hat W \in \cS^n$ such that $\hat W \geq 0$ and $\hat W_{\mathbb{L}\mathbb{L}} = 0$.
\State Choose an arbitrary $\hat Y \in \R^{n \times n}$ such that $\hat Y \geq 0$ and $\hat Y_{\mathbb{L}\mathbb{U}} = 0$.
\State Choose an arbitrary $\hat Z \in \cS^n$ such that $\hat Z \geq 0$ and $\hat Z_{\mathbb{U}\mathbb{U}} = 0$.
\State $Q \gets \hat W - \hat Y - \hat Y^T + \hat Z$
\State $c \gets -\hat u + \hat v - \hat W e + \hat Y^T e$
\end{algorithmic}
\caption{(BoxQP) Instance with an Exact RLT Relaxation}
\label{Alg1}
\end{algorithm}

The following result establishes the correctness of Algorithm~\ref{Alg1}.

\begin{proposition} \label{alg1-correct}
Algorithm~\ref{Alg1} returns $(Q,c) \in \cE_R$, where $\cE_R$ is defined as in \eqref{exact_RLT_set}. Conversely, any $(Q,c) \in \cE_R$ can be generated by Algorithm~\ref{Alg1} with appropriate choices of $\mathbb{L} \subseteq \{1,\ldots,n\}$ and $(\hat u,\hat v,\hat W,\hat Y,\hat Z) \in \R^n \times \R^n \times \cS^n \times \R^{n \times n} \times \cS^n$.  
\end{proposition}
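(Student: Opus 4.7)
The plan is to reduce everything to Corollary~\ref{alt_char_exact_RLT}, which says that $(Q,c) \in \cE_R$ iff there is a vertex $v \in F$ together with dual multipliers $(\hat u,\hat v,\hat W,\hat Y,\hat Z)$ satisfying the primal-dual optimality system \eqref{RLTcond00}--\eqref{RLTcond10} at $(\hat x,\hat X) = (v,vv^T)$. So both directions amount to matching the zero patterns prescribed in Algorithm~\ref{Alg1} with the componentwise complementary-slackness relations forced by that system.

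For the forward direction, let $v \in F$ be the vertex defined by $v_\mathbb{L} = 0$ and $v_\mathbb{U} = e_\mathbb{U}$. Dual feasibility \eqref{RLTcond6}--\eqref{RLTcond10} holds by construction, and the two equalities \eqref{RLTcond00}--\eqref{RLTcond01} hold by the definitions of $Q$ and $c$ in lines 7--8. It remains to check the five complementary-slackness relations. Using the identities $vv^T - ve^T - ev^T + ee^T = (e-v)(e-v)^T$ and $ev^T - vv^T = (e-v)v^T$, I observe that $(e-v)(e-v)^T$ is supported in the $\mathbb{L}\mathbb{L}$ block, $(e-v)v^T$ is supported in the $\mathbb{L}\mathbb{U}$ block, and $vv^T$ is supported in the $\mathbb{U}\mathbb{U}$ block. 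Combined with the corresponding vanishing pattern $\hat W_{\mathbb{L}\mathbb{L}}=0$, $\hat Y_{\mathbb{L}\mathbb{U}}=0$, $\hat Z_{\mathbb{U}\mathbb{U}}=0$ imposed by lines 4--6, this yields \eqref{RLTcond3}--\eqref{RLTcond5}. Similarly, $(e-v)_\mathbb{U} = 0$ and $\hat u_\mathbb{L} = 0$ give \eqref{RLTcond1}, while $v_\mathbb{L} = 0$ and $\hat v_\mathbb{U} = 0$ give \eqref{RLTcond2}. By Corollary~\ref{alt_char_exact_RLT}, $(Q,c) \in \cE_R$.

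For the converse, suppose $(Q,c) \in \cE_R$. By Corollary~\ref{alt_char_exact_RLT}, pick a vertex $v$ and multipliers $(\hat u,\hat v,\hat W,\hat Y,\hat Z)$ satisfying \eqref{RLTcond00}--\eqref{RLTcond10} at $(v,vv^T)$. Set $\mathbb{L} = \{j:v_j=0\}$ and $\mathbb{U} = \{1,\ldots,n\}\setminus \mathbb{L}$. Since $\hat u \geq 0$ and $e-v \geq 0$, the scalar equation \eqref{RLTcond1} reduces to the componentwise statement $\hat u_j (1-v_j) = 0$, which forces $\hat u_\mathbb{L} = 0$; the same argument applied to \eqref{RLTcond2} yields $\hat v_\mathbb{U} = 0$. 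The inner-product identities \eqref{RLTcond3}--\eqref{RLTcond5} involve nonnegative matrices against the indicator-type matrices $(e-v)(e-v)^T$, $(e-v)v^T$, $vv^T$, so entrywise nonnegativity forces $\hat W_{\mathbb{L}\mathbb{L}}=0$, $\hat Y_{\mathbb{L}\mathbb{U}}=0$, and $\hat Z_{\mathbb{U}\mathbb{U}}=0$, respectively. These are precisely the constraints in lines 2--6 of Algorithm~\ref{Alg1}, and \eqref{RLTcond00}--\eqref{RLTcond01} match lines 7--8, so $(Q,c)$ is generated by the algorithm with the indicated choices.

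There is no substantive obstacle in this proof: the entire content is the dictionary between the zero patterns of Algorithm~\ref{Alg1} and the block-supports of the three matrices $(e-v)(e-v)^T$, $(e-v)v^T$, $vv^T$ determined by a vertex. The only place where one must be slightly careful is recognizing that each of the scalar complementary-slackness conditions \eqref{RLTcond1}--\eqref{RLTcond5} is in fact equivalent to a componentwise (or entrywise) identity, which is legitimate because all quantities involved are nonnegative.
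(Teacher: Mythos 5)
Your proof is correct and follows essentially the same route as the paper's: both directions are reduced to Corollary~\ref{alt_char_exact_RLT}, with the complementary-slackness conditions \eqref{RLTcond1}--\eqref{RLTcond5} matched against the zero patterns of Algorithm~\ref{Alg1}. The paper leaves this verification as ``easy to verify,'' whereas you spell out the supporting identities $vv^T - ve^T - ev^T + ee^T = (e-v)(e-v)^T$ and $ev^T - vv^T = (e-v)v^T$ explicitly; the content is the same.
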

\begin{proof}
Let $\mathbb{L} \subseteq \{1,\ldots,n\}$ and define $\mathbb{U} = \{1,\ldots,n\} \backslash \mathbb{L}$. Let $v \in F$ be the vertex given by $v_j = 0,~j \in \mathbb{L}$ and $v_j = 1,~j \in \mathbb{U}$. It is easy to verify that $(\hat u,\hat v,\hat W,\hat Y,\hat Z) \in \R^n \times \R^n \times \cS^n \times \R^{n \times n} \times \cS^n$ and $(\hat x,\hat X) = (v,vv^T)$ satisfy the hypotheses of Corollary~\ref{alt_char_exact_RLT}, which establishes the first assertion. The second assertion also follows from Corollary~\ref{alt_char_exact_RLT}.
\end{proof}

By considering all possible subsets $\mathbb{L} \subseteq \{1,\ldots,n\}$, Proposition~\ref{alg1-correct} yields an alternative characterization of $\cE_R$ given by the union of $2^n$ polyhedral cones (cf.~Proposition~\ref{exact_RLT_char1}). In contrast with the description in Proposition~\ref{exact_RLT_char1}, this alternative description enables us to easily construct an instance of (BoxQP) with a known optimal vertex and an exact RLT relaxation (cf.~Corollary~\ref{vertex-optimality}). Note, however, that even the alternative description is not very useful for effectively checking if $(Q,c) \in \cE_R$ due to the exponential number of polyhedral cones.

\subsection{Construction of Instances with Inexact RLT Relaxations} \label{const_inexact_rlt}

In this section, we propose an algorithm for constructing instances of (BoxQP) with an inexact RLT relaxation. Algorithm~\ref{Alg2} is based on constructing a dual optimal solution of (R-D) such that no feasible solution of the form $(v, vv^T) \in \cF_R$ can be an optimal solution of (R), where $v \in F$ is a vertex.

\begin{algorithm}
\begin{algorithmic}[1]
\Require $n; \mathbb{B} \subseteq \{1,\ldots,n\}; \mathbb{B} \neq \emptyset$
\Ensure $(Q,c) \not \in \cE_R$
\State Choose an arbitrary $\mathbb{L} \subseteq \{1,\ldots,n\} \backslash \mathbb{B}$
\State $\mathbb{U} \gets \{1,\ldots,n\} \backslash (\mathbb{B} \cup \mathbb{L})$
\State Choose an arbitrary $k \in \mathbb{B}$.
\State Choose an arbitrary $\hat u \in \R^n$ such that $\hat u_\mathbb{U} \geq  0$ and $\hat u_{\mathbb{L} \cup \mathbb{B}} = 0$.
\State Choose an arbitrary $\hat v \in \R^n$ such that $\hat v_\mathbb{L} \geq 0$ and $\hat v_{\mathbb{B} \cup \mathbb{U}} = 0$.
\State Choose an arbitrary $\hat W \in \cS^n$ such that $\hat W_{\mathbb{L}\mathbb{L}} = 0,~\hat W_{\mathbb{L}\mathbb{B}} = 0,~\hat W_{\mathbb{B}\mathbb{L}} = 0,~\hat W_{kk} > 0$, and $\hat W_{ij} \geq 0$ otherwise.
\State Choose an arbitrary $\hat Y \in \R^{n \times n}$ such that $\hat Y_{\mathbb{L}\mathbb{B}} = 0,~\hat Y_{\mathbb{L}\mathbb{U}} = 0,~\hat Y_{\mathbb{B}\mathbb{B}} = 0,~\hat Y_{\mathbb{B}\mathbb{U}} = 0$ and $\hat Y_{ij} \geq 0$ otherwise.
\State Choose an arbitrary $\hat Z \in \cS^n$ such that $\hat Z_{\mathbb{B}\mathbb{U}} = 0,~\hat Z_{\mathbb{U}\mathbb{B}} = 0,~\hat Z_{\mathbb{U}\mathbb{U}} = 0,~\hat Z_{kk} > 0$,
and $\hat Z_{ij} \geq 0$ otherwise.
\State $Q \gets \hat W - \hat Y - \hat Y^T + \hat Z$
\State $c \gets -\hat u + \hat v - \hat W e + \hat Y^T e$
\end{algorithmic}
\caption{(BoxQP) Instance with an Inexact RLT Relaxation}
\label{Alg2}
\end{algorithm}

The next result establishes that the output from Algorithm~\ref{Alg2} is an instance of (BoxQP) with an inexact RLT relaxation.

\begin{proposition} \label{Alg2correct}
Algorithm~\ref{Alg2} returns $(Q,c) \not \in \cE_R$, where $\cE_R$ is defined as in \eqref{exact_RLT_set}.    
\end{proposition}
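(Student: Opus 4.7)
The plan is to certify that the vector $(\hat u,\hat v,\hat W,\hat Y,\hat Z)$ produced by Algorithm~\ref{Alg2} is a dual optimal solution of (R-D) by exhibiting a matching primal feasible point, and then to use the complementary slackness implications at the diagonal index $k \in \mathbb{B}$ to rule out every vertex of $F$ as a candidate primal optimal solution.

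First, I would observe that dual feasibility of $(\hat u,\hat v,\hat W,\hat Y,\hat Z)$ for (R-D) is immediate: the sign restrictions are imposed in lines~4--8, while the two equality constraints of (R-D) coincide with the definitions of $Q$ and $c$ in lines~9--10. Next, I would exhibit a primal feasible point $(\hat x,\hat X) \in \cF_R$ satisfying complementary slackness with the algorithm's dual output. The natural candidate is
\[
\hat x_\mathbb{L} = 0, \quad \hat x_\mathbb{B} = \textstyle \frac{1}{2} e_\mathbb{B}, \quad \hat x_\mathbb{U} = e_\mathbb{U},
\]
with $\hat X_{ij} = \hat x_i \hat x_j$ whenever $i \notin \mathbb{B}$ or $j \notin \mathbb{B}$, and $\hat X_{\mathbb{B}\mathbb{B}} = 0$. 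Membership in $\cF_R$ follows from Lemma~\ref{feas_cond}. The verification of \eqref{RLTcond1}--\eqref{RLTcond5} then amounts to a block-by-block check over the partition $\mathbb{L},\mathbb{B},\mathbb{U}$: in each block the slack matrix $\hat X - \hat xe^T - e\hat x^T + ee^T$, $e\hat x^T - \hat X$, or $\hat X$ either vanishes or is multiplied against a block of $\hat W$, $\hat Y$, or $\hat Z$ that is zeroed out by construction in lines~6--8. Once all five conditions are verified, Lemma~\ref{opt-cond-R-RD} certifies that $(\hat x,\hat X)$ is primal optimal for (R) and that $(\hat u,\hat v,\hat W,\hat Y,\hat Z)$ is dual optimal for (R-D).

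The decisive step is then a contradiction argument. Suppose for contradiction that some vertex $v \in F$ gives a primal optimal $(v,vv^T)$. Because the algorithm's dual solution is dual optimal, LP strong duality implies that $(v,vv^T)$ and $(\hat u,\hat v,\hat W,\hat Y,\hat Z)$ must also satisfy the complementary slackness conditions \eqref{RLTcond1}--\eqref{RLTcond5}. Evaluating \eqref{RLTcond3} and \eqref{RLTcond5} at the diagonal entry $(k,k)$ with $k \in \mathbb{B}$ yields
\[
\hat W_{kk}(1 - v_k)^2 = 0 \quad \text{and} \quad \hat Z_{kk} v_k^2 = 0.
\]
Since $v_k \in \{0,1\}$ and the algorithm enforces $\hat W_{kk} > 0$ and $\hat Z_{kk} > 0$, the first equation forces $v_k = 1$ while the second forces $v_k = 0$, a contradiction. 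Therefore no vertex of $F$ can be primal optimal for (R), and Corollary~\ref{vertex-optimality} yields $(Q,c) \notin \cE_R$, as required.

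The main obstacle is the second step: the case-by-case verification of the three inner-product identities \eqref{RLTcond3}--\eqref{RLTcond5} across the nine $\{\mathbb{L},\mathbb{B},\mathbb{U}\}$-blocks. Each individual check is elementary because $\hat x_j \in \{0,\tfrac{1}{2},1\}$ and $\hat X_{\mathbb{B}\mathbb{B}}=0$, but the bookkeeping must line up precisely with the support restrictions placed on $\hat W$, $\hat Y$, and $\hat Z$ in lines~6--8. Once that certification is in hand, the terminal diagonal contradiction is immediate, and the result follows directly from Corollary~\ref{vertex-optimality}.
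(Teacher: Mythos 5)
Your proposal is correct and follows essentially the same route as the paper: certify the same primal candidate $(\hat x,\hat X)$ with $\hat x_{\mathbb{B}} = \tfrac{1}{2}e_{\mathbb{B}}$ and $\hat X_{\mathbb{B}\mathbb{B}} = 0$ as optimal via Lemma~\ref{opt-cond-R-RD}, then use entrywise complementary slackness at the diagonal index $k$ against $\hat W_{kk} > 0$ and $\hat Z_{kk} > 0$ to exclude vertices. The only cosmetic difference is that the paper derives $\tilde x_k = \tfrac{1}{2}$ for every optimal solution of (R), whereas you specialize directly to a hypothetical vertex and obtain $v_k = 1$ and $v_k = 0$ simultaneously; both are the same computation.
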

\begin{proof}
Consider the partition $(\mathbb{L},\mathbb{B},\mathbb{U})$ of the index set $\{1,\ldots,n\}$ as defined in Algorithm~\ref{Alg2}, where $\mathbb{B} \neq \emptyset$. Clearly, $(\hat u,\hat v,\hat W,\hat Y,\hat Z) \in \R^n \times \R^n \times \cS^n \times \R^{n \times n} \times \cS^n$ is a feasible solution of (R-D). We will construct a feasible solution $(\hat x, \hat X) \in \R^n \times \cS^n$ of (R) that satisfies the optimality conditions of Lemma~\ref{opt-cond-R-RD}. 

Consider the following solution $(\hat x, \hat X) \in \R^n \times \cS^n$:
\[
\hat x_{\mathbb{L}} = 0, \quad \hat x_{\mathbb{B}} = \textstyle \frac{1}{2} e_\mathbb{B} \quad \hat x_{\mathbb{U}} = e_\mathbb{U},
\]
and
\[
\begin{bmatrix} \hat X_{\mathbb{L}\mathbb{L}} & \hat X_{\mathbb{L}\mathbb{B}} & \hat X_{\mathbb{L}\mathbb{U}} \\ \hat X_{\mathbb{B}\mathbb{L}} & \hat X_{\mathbb{B}\mathbb{B}} & \hat X_{\mathbb{B}\mathbb{U}} \\
\hat X_{\mathbb{U}\mathbb{L}} & \hat X_{\mathbb{U}\mathbb{B}} & \hat X_{\mathbb{U}\mathbb{U}} \end{bmatrix} = \begin{bmatrix} 0 & 0 & 0 \\ 0 & 0 & \textstyle \frac{1}{2} e_\mathbb{B} e_{\mathbb{U}}^T \\
0 & \textstyle \frac{1}{2} e_{\mathbb{U}} e_{\mathbb{B}}^T & e_{\mathbb{U}} e_{\mathbb{U}}^T \end{bmatrix}.
\]
By Lemma~\ref{feas_cond}, $(\hat x, \hat X) \in \cF_R$. By Steps 4, 5, 6, 7, and 8 of Algorithm~\ref{Alg2}, it is easy to verify that \eqref{RLTcond1}, \eqref{RLTcond2}, \eqref{RLTcond3}, \eqref{RLTcond4}, and \eqref{RLTcond5} are respectively satisfied. Therefore, by Lemma~\ref{opt-cond-R-RD}, we conclude that $(\hat x, \hat X)$ is an optimal solution of (R) and $(\hat u,\hat v,\hat W,\hat Y,\hat Z)$ is an optimal solution of (R-D).

We next argue that the RLT relaxation is inexact. Let $(\tilde x, \tilde X) \in \cF_R$ be an arbitrary optimal solution of (R). By Lemma~\ref{feas_cond}, $(\tilde x, \tilde X)$ and $(\hat u,\hat v,\hat W,\hat Y,\hat Z)$ satisfy the conditions \eqref{RLTcond1}, \eqref{RLTcond2}, \eqref{RLTcond3}, \eqref{RLTcond4}, and \eqref{RLTcond5}. 
By \eqref{RLTcond5} and Step 8 of Algorithm~\ref{Alg2}, we obtain $\tilde X_{kk} = 0$ since $\hat Z_{kk} > 0$.  
Since $\hat W_{kk} > 0$ by Step 6 of Algorithm~\ref{Alg2}, the relation \eqref{RLTcond3} implies that $\tilde X_{kk} - 2 \tilde x_k + 1 = 0$, i.e., $\tilde x_k = \textstyle \frac{1}{2}$ since $\tilde X_{kk} = 0$. By Lemma~\ref{opt-cond-R-RD}, we conclude that $\tilde x_k = \textstyle \frac{1}{2}$ for each optimal solution $(\tilde x, \tilde X) \in \cF_R$ of (R). By Corollary~\ref{vertex-optimality}, we conclude that $(Q,c) \not \in \cE_R$.
\end{proof}

Algorithm~\ref{Alg2} can be used to generate an instance of (BoxQP) with an inexact RLT relaxation. 
Note that Algorithm~\ref{Alg2} constructs an instance $(Q,c)$ with the property that at least one component $\hat x_k$ is fractional at every optimal solution $(\hat x, \hat X)$ of (R), which is sufficient for having an inexact RLT relaxation by Corollary~\ref{vertex-optimality}. However, this condition may not be necessary for an inexact RLT relaxation. In particular, note that an instance generated by Algorithm~\ref{Alg2} cannot have a concave objective function since $Q_{kk} = \hat W_{kk} + \hat Z_{kk} > 0$. On the other hand, for the specific instance $(Q,c) \in \cS^3 \times \R^3$ in~\cite{anstreicher2010computable} given by $Q = \frac{1}{3}ee^T - I$, where $I \in \cS^3$ denotes the identity matrix, and $c = 0$, the objective function is concave and the optimal value is given by $\ell^* = -\textstyle\frac{1}{3}$, which is attained at any vertex that has exactly one component equal to 1. For $\hat x = \textstyle\frac{1}{2}e \in \R^3$, we have $\ell_R(\hat x) = -\frac{1}{2} < -\textstyle\frac{1}{3} = \ell^*$ by Lemma~\ref{piecewise-lin}, which implies that the RLT relaxation is inexact on this instance. Therefore, in contrast with Algorithm~\ref{Alg1}, we conclude that Algorithm~\ref{Alg2} may not necessarily generate all possible instances $(Q,c) \not \in \cE_R$.

\section{Exact and Inexact SDP-RLT Relaxations} \label{Sec5}

In this section, we focus on the set of instances of (BoxQP) that admit exact and inexact SDP-RLT relaxations. We give a complete algebraic description of the set of instances of (BoxQP) that admit an exact SDP-RLT relaxation. In addition, we develop an algorithm for constructing such an instance of (BoxQP) as well as for constructing an instance of (BoxQP) with an exact SDP-RLT relaxation but an inexact RLT relaxation. 

Similar to the RLT relaxation, let us define
\begin{equation} \label{exact_RLT_SDP_set}
\cE_{RS} = \{(Q,c) \in \cS^n \times \R^n: \ell^* = \ell_{RS}\},    
\end{equation}
i.e., $\cE_{RS}$ denotes the set of all instances of (BoxQP) that admit an exact SDP-RLT relaxation. 
By \eqref{convex_rels}, the SDP-RLT relaxation of any instance of (BoxQP) is at least as tight as the RLT relaxation. It follows that 
\begin{equation} \label{cERsubsCERS}
\cE_R \subseteq \cE_{RS}, 
\end{equation}
where $\cE_R$ is given by \eqref{exact_RLT_set}. 

By Corollary~\ref{implications} and \eqref{def_RS1}, we clearly have $(Q,c) \in \cE_{RS}$ whenever $Q \succeq 0$. Furthermore, the SDP-RLT relaxation is always exact (i.e., $\cE_{RS} = \cS^n \times \R^n$) if and only if $n \leq 2$~\cite{anstreicher2010computable}.

For the RLT relaxation, Proposition~\ref{half_frac} established the existence of an optimal solution of the RLT relaxation with a particularly simple structure. This observation enabled us to characterize the set of instances of (BoxQP) with an exact RLT relaxation as the union of a finite number polyhedral cones (see Proposition~\ref{exact_RLT_char1}). In contrast, the next result shows that the set of optimal solutions of the SDP-RLT relaxation cannot have such a simple structure.

\begin{lemma} \label{SDP-RLT-opt-char}
 For any $\hat x \in F$, there exists an instance $(Q,c)$ of (BoxQP) such that $\hat x$ is the unique optimal solution of (RS1), where (RS1) is given by \eqref{def_RS1}, and $(Q,c) \in \cE_{RS}$. 
\end{lemma}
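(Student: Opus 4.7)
The plan is to exploit Corollary~\ref{implications}(i), which says that whenever $Q \succeq 0$, the SDP-RLT underestimator $\ell_{RS}(\cdot)$ coincides with the true objective $q(\cdot)$ pointwise on $F$. If we can pick a positive semidefinite $Q$ and a linear term $c$ so that the quadratic $q$ has $\hat x$ as its unique minimizer on $F$, then $\ell_{RS}$ will agree with $q$ on $F$, so $\hat x$ will also be the unique minimizer of (RS1) and the relaxation will automatically be exact.

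The concrete construction I would use is the simplest possible one: take $Q = I \in \cS^n$ (the identity matrix) and $c = -\hat x$. Then
\[
q(x) = \tfrac{1}{2} x^T x - \hat x^T x = \tfrac{1}{2}\|x - \hat x\|^2 - \tfrac{1}{2}\|\hat x\|^2,
\]
which is strictly convex with unconstrained minimizer $\hat x$. Since $\hat x \in F$ by hypothesis, it is also the unique minimizer of $q$ over $F$, and in particular $\ell^* = q(\hat x) = -\tfrac{1}{2}\|\hat x\|^2$.

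Now I would apply Corollary~\ref{implications}(i): because $Q = I \succeq 0$, we have $\ell_{RS}(x) = q(x)$ for every $x \in F$. Combined with (RS1) from \eqref{def_RS1}, this yields
\[
\ell^*_{RS} = \min_{x \in F} \ell_{RS}(x) = \min_{x \in F} q(x) = \ell^*,
\]
so $(Q,c) \in \cE_{RS}$. Moreover, because $q$ is strictly convex, $\hat x$ is the unique minimizer of $\ell_{RS}$ on $F$, which is exactly the uniqueness assertion of (RS1).

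There is essentially no obstacle here: the entire argument reduces to a one-line construction plus an invocation of Corollary~\ref{implications}(i). The only subtlety worth flagging is the \emph{uniqueness} requirement, which is what forces the choice $Q \succ 0$ rather than merely $Q \succeq 0$; any positive definite $Q$ with $c = -Q\hat x$ would work equally well, but $Q = I$ keeps the verification trivial.
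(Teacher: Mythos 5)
Your proposal is correct and follows essentially the same route as the paper: the paper's proof takes a general $Q \succ 0$ with $c = -Q\hat x$, observes that $q(x) = \tfrac{1}{2}\bigl((x-\hat x)^T Q (x-\hat x) - \hat x^T Q \hat x\bigr)$ is strictly convex with unique minimizer $\hat x$, and invokes Lemma~\ref{exact_underests} to get $\ell_{RS} = q$ on $F$, exactly as you do via Corollary~\ref{implications}(i). Your specialization to $Q = I$, $c = -\hat x$ is just a concrete member of the same family of instances, so the arguments coincide.
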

\begin{proof}
 For any $\hat x \in F$, consider an instance of (BoxQP) with $(Q,c) \in \cS^n \times \R^n$, where $Q \succ 0$ and $c = -Q \hat x$. We obtain $q(x) = \textstyle\frac{1}{2}\left((x - \hat x)^T Q (x - \hat x) - \hat x^T Q \hat x\right)$. Since $Q \succ 0$, $\hat x$ is the unique unconstrained minimizer of $q(x)$. By Lemma~\ref{exact_underests}, since $Q \succ 0$, we have $\ell_{RS}(x) = q(x)$ for each $x \in F$, which implies that $\ell^* = q(x^*) = \ell_{RS}(x^*) = \ell^*_{RS}$. It follows that $(Q,c) \in \cE_{RS}$. The uniqueness follows from the strict convexity of $\ell_{RS}(\cdot)$ since $\ell_{RS}(x) = q(x)$ for each $x \in F$.   
\end{proof}
   
In the next section, we rely on duality theory to obtain a description of the set $\cE_{RS}$.

\subsection{The Dual Problem}

In this section, we present the dual of the SDP-RLT relaxation given by (RS) and establish several useful properties. 

Recall that the SDP-RLT relaxation is given by
\[
\textrm{(RS)} \quad \ell^*_{RS} = \min\limits_{(x,X) \in \R^n \times \cS^n} \left\{\frac{1}{2}\langle Q, X\rangle + c^T x: (x,X) \in \cF_{RS}\right\},
\]
where $\cF_{RS}$ is given by \eqref{def_cFRS}. 

By the Schur complement property, we have
\begin{equation} \label{schur_comp}
X - x x^T \succeq 0 \Longleftrightarrow \begin{bmatrix} 1 & x^T \\ x & X \end{bmatrix} \succeq 0,
\end{equation}
which implies that (RS) can be formulated as a linear semidefinite programming problem. 

By using the same set of dual variables $(u,v,W,Y,Z) \in \R^n \times \R^n \times \cS^n \times \R^{n \times n} \times \cS^n$ as in (R-D) corresponding to the common constraints in $\cF_R$ and $\cF_{RS}$ (see \eqref{def_cFR}), and defining the dual variable 
\[\begin{bmatrix} \beta & h^T \\ h & H \end{bmatrix} \in \cS^{n+1},\]
where $\beta \in \R$, $h \in \R^n$, and $H \in \cS^n$, corresponding to the additional semidefinite constraint in \eqref{schur_comp}, the dual problem of (RS) is given by 
\[
\begin{array}{llrcl}
\textrm{(RS-D)} & \max\limits_{(u,v,W,Y,Z,\beta,h,H) \in \R^n \times \R^n \times \cS^n \times \R^{n \times n} \times \cS^n \times \R \times \R^n \times \cS^n} & -e^T u - \frac{1}{2} e^T W e - \frac{1}{2} \beta& & \\
 & \textrm{s.t.} & & & \\
 & & -u + v - W e + Y^T e + h & = & c \\
 & & W - Y - Y^T + Z + H & = & Q \\
 & & u & \geq & 0 \\
 & & v & \geq & 0 \\
 & & W & \geq & 0 \\
 & & Y & \geq & 0 \\
 & & Z & \geq & 0 \\
 & & \begin{bmatrix} \beta & h^T \\ h & H \end{bmatrix} & \succeq & 0.
\end{array}
\]

In contrast with linear programming, stronger assumptions are needed to guarantee strong duality in semidefinite programming. We first establish that  (RS) and (RS-D) satisfy such assumptions.

\begin{lemma} \label{SDP-RLT-str-duality}
Strong duality holds between (RS) and (RS-D), and
optimal solutions are attained in both (RS) and (RS-D).    
\end{lemma}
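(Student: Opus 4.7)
The plan is to invoke the standard strong duality theorem for semidefinite programming: if the primal SDP is strictly feasible (Slater's condition) and attains a finite optimal value, then the dual is attained and the optimal values coincide. We therefore need to verify two things, namely (a) that $(\mathrm{RS})$ has a Slater point, and (b) that the feasible region $\cF_{RS}$ is compact, which immediately gives attainment in $(\mathrm{RS})$ and a finite primal optimal value.

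For the Slater point, I would take $\hat x = \tfrac{1}{2}e$ together with $\hat X = \hat x \hat x^T + \epsilon I$ for some $\epsilon \in (0, \tfrac{1}{4})$. Using $\hat x_i = \tfrac{1}{2}$, one checks directly that $\hat X_{ii} = \tfrac{1}{4}+\epsilon$ lies strictly between $\max\{2\hat x_i-1,0\}=0$ and $\min\{\hat x_i,\hat x_i\}=\tfrac{1}{2}$, while for $i\neq j$ we have $\hat X_{ij} = \tfrac{1}{4}$, which lies strictly between $\max\{\hat x_i+\hat x_j-1,0\}=0$ and $\min\{\hat x_i,\hat x_j\}=\tfrac{1}{2}$. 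Moreover $\hat X - \hat x \hat x^T = \epsilon I \succ 0$, so every inequality (including the PSD one) is strict; hence $(\hat x,\hat X)$ is strictly feasible for $(\mathrm{RS})$ via the Schur complement reformulation \eqref{schur_comp}.

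For compactness of $\cF_{RS}$, note that the McCormick bounds in \eqref{def_cFR_alt} together with $0\leq x\leq e$ confine $(x,X)$ to a bounded subset of $\R^n\times\cS^n$, and all of the defining inequalities are closed, as is the PSD constraint $X-xx^T\succeq 0$. Therefore $\cF_{RS}$ is compact, so the continuous linear objective attains its minimum, giving both attainment in $(\mathrm{RS})$ and finiteness of $\ell^*_{RS}$. Combining this with the Slater point constructed above and applying the standard strong duality theorem for semidefinite programming (see, e.g., any standard SDP duality reference), I conclude that $(\mathrm{RS\text{-}D})$ attains its optimal value and that this value equals $\ell^*_{RS}$, establishing the lemma.

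The only mildly subtle step is verifying the Slater point: one must simultaneously satisfy the McCormick strict inequalities and strict positive definiteness of $X - xx^T$, and with the symmetric choice $\hat x = \tfrac{1}{2}e$ the McCormick envelopes degenerate to $(0,\tfrac{1}{2})$, which leaves very little slack and needs the perturbation by $\epsilon I$ calibrated so that the diagonal of $\hat X$ stays below $\tfrac{1}{2}$. Once this is in place, everything else is a direct appeal to compactness and the textbook SDP duality theorem, so I do not anticipate any serious technical obstacle.
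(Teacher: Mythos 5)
Your proposal is correct and takes essentially the same route as the paper: the same Slater point $\hat x = \tfrac{1}{2}e$, $\hat X = \hat x \hat x^T + \epsilon I$ with $\epsilon \in (0,\tfrac14)$, combined with boundedness of $\cF_{RS}$ for primal attainment and the standard conic/SDP strong duality theorem for dual attainment. Your explicit verification of the strict McCormick inequalities is slightly more detailed than the paper's, but the argument is the same.
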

\begin{proof}
Note that $\cF_{RS}$ is a nonempty and bounded set since $0 \leq x_j \leq 1$ and $X_{jj} \leq 1$ for each $j = 1,\ldots,n$. Therefore, the set of optimal solutions of (RS) is nonempty. Let $\hat x = \textstyle \frac{1}{2} e \in \R^n$ and let $\hat X = \hat x \hat x^T + \epsilon I \in \cS^n$, where $\epsilon \in (0,\textstyle\frac{1}{4})$. By Lemma~\ref{feas_cond}, $(\hat x, \hat X) \in \cF_{RS}$. Furthermore, it is a strictly feasible solution of (RS) since $(\hat x, \hat X)$ satisfies all the constraints strictly. Strong duality and attainment in (RS-D) follow from conic duality.
\end{proof}

Lemma~\ref{SDP-RLT-str-duality} allows us to give a complete characterization of optimality conditions for the pair (RS) and (RS-D).

\begin{lemma} \label{opt-cond-SDP-RLT}
 $(\hat x, \hat X) \in \cF_{RS}$ is an optimal solution of (RS) if and only if there exists 
 \[
 (\hat u,\hat v,\hat W,\hat Y,\hat Z,\hat \beta,\hat h,\hat H) \in \R^n \times \R^n \times \cS^n \times \R^{n \times n} \times \cS^n \times \R \times \R^n \times \cS^n
 \]
 such that
 \begin{eqnarray}
Q & = & \hat W - \hat Y - \hat Y^T + \hat Z + \hat H, \label{RLTSDPcond1}\\
c & = & -\hat u + \hat v - \hat W e + \hat Y^T e + \hat h, \label{RLTSDPcond2}\\
\begin{bmatrix} \hat \beta & \hat h^T \\ \hat h & \hat H \end{bmatrix} & \succeq & 0 \label{RLTSDPcond3}\\
\left\langle \begin{bmatrix} 1 & \hat x^T \\ \hat x & \hat X \end{bmatrix} , \begin{bmatrix} \hat \beta & \hat h^T \\ \hat h & \hat H \end{bmatrix} \right\rangle & = & 0, \label{RLTSDPcond4}
 \end{eqnarray}
 and \eqref{RLTcond1}--\eqref{RLTcond10} are satisfied.
\end{lemma}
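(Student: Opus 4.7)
The plan is to derive the stated conditions as the KKT conditions for the primal-dual pair (RS)/(RS-D) and to invoke the strong duality result from Lemma~\ref{SDP-RLT-str-duality} to justify that these conditions are both necessary and sufficient for primal optimality.

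First, I would observe that (RS) is a linear conic (semidefinite) program once the constraint $X - xx^T \succeq 0$ is rewritten via the Schur complement \eqref{schur_comp}, and that (RS-D) as displayed is precisely its conic dual, with dual variables partitioned into multipliers $(u,v,W,Y,Z)$ for the linear (in)equality constraints shared with $\cF_R$ (exactly as in (R-D)) and a multiplier $\begin{bmatrix}\beta & h^T \\ h & H\end{bmatrix} \succeq 0$ for the extra semidefinite constraint. The equations \eqref{RLTSDPcond1} and \eqref{RLTSDPcond2} are then just the two linear equality constraints of (RS-D), while \eqref{RLTSDPcond3} together with \eqref{RLTcond6}--\eqref{RLTcond10} are the remaining (conic) dual feasibility conditions.

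Next, by Lemma~\ref{SDP-RLT-str-duality}, strong duality holds and optimal solutions are attained in both (RS) and (RS-D). Hence $(\hat x,\hat X)$ is optimal in (RS) if and only if it is primal feasible (i.e., $(\hat x,\hat X) \in \cF_{RS}$) and there exists a dual feasible $(\hat u,\hat v,\hat W,\hat Y,\hat Z,\hat\beta,\hat h,\hat H)$ such that the duality gap vanishes. Writing the gap out and using \eqref{RLTSDPcond1}--\eqref{RLTSDPcond2} to eliminate $Q$ and $c$ in $\tfrac12\langle Q,\hat X\rangle+c^T\hat x$, the gap decomposes as a sum of nonnegative inner products, one per constraint of (RS): the five terms corresponding to the linear constraints of $\cF_R$ (as in Lemma~\ref{opt-cond-R-RD}) together with the single conic inner product $\left\langle \begin{bmatrix}1 & \hat x^T\\ \hat x & \hat X\end{bmatrix},\begin{bmatrix}\hat\beta & \hat h^T\\ \hat h & \hat H\end{bmatrix}\right\rangle$. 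Each term is nonnegative by primal-dual feasibility (componentwise for the linear pieces; by the fact that the trace inner product of two PSD matrices is nonnegative for the conic piece). Vanishing of the sum is therefore equivalent to vanishing of each term, yielding \eqref{RLTcond1}--\eqref{RLTcond5} together with \eqref{RLTSDPcond4}.

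The potentially delicate point is the conic complementarity condition \eqref{RLTSDPcond4}. Unlike the linear complementarity conditions, which follow immediately from the componentwise sign structure, here I rely on the standard fact that for PSD matrices $A,B \succeq 0$, $\langle A,B\rangle = 0$ if and only if $AB = 0$; in our setting the nonnegativity of the trace product is what is actually needed to split the gap, and this in turn follows from \eqref{schur_comp} and \eqref{RLTSDPcond3}. With this in hand, the sufficiency direction is also immediate: given any $(\hat x,\hat X)\in\cF_{RS}$ and a dual feasible tuple satisfying \eqref{RLTSDPcond1}--\eqref{RLTSDPcond4} and \eqref{RLTcond1}--\eqref{RLTcond10}, reversing the decomposition shows that the primal and dual objectives coincide, so both $(\hat x,\hat X)$ and the dual tuple are optimal by weak duality.
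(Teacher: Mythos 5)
Your proposal is correct and takes essentially the same route as the paper, which simply states that the claim follows from strong duality between (RS) and (RS-D) as established in Lemma~\ref{SDP-RLT-str-duality}; you have merely written out the standard complementary-slackness decomposition of the duality gap that this one-line proof implicitly relies on. Your remark that only nonnegativity of the trace inner product of PSD matrices (not the full $AB=0$ characterization) is needed here is accurate.
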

\begin{proof}
The claim follows from strong duality between (RS) and (RS-D), which holds by Lemma~\ref{SDP-RLT-str-duality}.   
\end{proof}

Using Lemma~\ref{opt-cond-SDP-RLT}, we obtain the following description of the set of instances of (BoxQP) with an exact SDP-RLT relaxation.

\begin{proposition} \label{exact_SDP_RLT_char}
$(Q,c) \in \cE_{RS}$, where $\cE_{RS}$ is defined as in \eqref{exact_RLT_SDP_set}, if and only if there exists $\hat x \in F$ and there exists $(\hat u,\hat v,\hat W,\hat Y,\hat Z,\hat \beta,\hat h,\hat H) \in \R^n \times \R^n \times \cS^n \times \R^{n \times n} \times \cS^n \times \R \times \R^n \times \cS^n$ such that the conditions of Lemma~\ref{opt-cond-SDP-RLT} are satisfied, where $(\hat x, \hat X) = (\hat x, \hat x \hat x^T)$. Furthermore, in this case, $\hat x$ is an optimal solution of (BoxQP).
\end{proposition}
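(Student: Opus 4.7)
The plan is to establish both implications as straightforward corollaries of Lemma~\ref{opt-cond-SDP-RLT}, using the sandwich inequalities \eqref{convex_rels} and \eqref{conv_underest_rel_1} to pass between (BoxQP) and its SDP-RLT relaxation. The key observation driving both directions is that exactness of the SDP-RLT relaxation is equivalent to the existence of a rank-one lifted optimal solution of (RS) of the form $(\hat x, \hat x \hat x^T)$ with $\hat x \in F$.

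For the forward direction, I would assume $(Q,c) \in \cE_{RS}$, so $\ell^* = \ell^*_{RS}$, and pick an optimal solution $x^* \in F$ of (BoxQP). The reduced formulation \eqref{def_RS1} together with \eqref{conv_underest_rel_1} then yields
\[
q(x^*) = \ell^* = \ell^*_{RS} \leq \ell_{RS}(x^*) \leq q(x^*),
\]
so all terms are equal and in particular $\ell_{RS}(x^*) = \ell^*_{RS}$. Next, I would verify that $(x^*, x^*(x^*)^T) \in \cF_{RS}$, which is immediate: the McCormick inequalities hold with equality at $X_{ij} = x^*_i x^*_j$, the bound constraints on $x^*$ are satisfied by assumption, and $X - xx^T = 0 \succeq 0$. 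Since this point attains the objective value $\frac{1}{2} \langle Q, x^*(x^*)^T \rangle + c^T x^* = q(x^*) = \ell^*_{RS}$, it is an optimal solution of (RS). Lemma~\ref{opt-cond-SDP-RLT} applied with $(\hat x, \hat X) = (x^*, x^*(x^*)^T)$ then produces the required dual multipliers.

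For the converse direction, I would suppose that $\hat x \in F$ and multipliers $(\hat u, \hat v, \hat W, \hat Y, \hat Z, \hat \beta, \hat h, \hat H)$ satisfy the conditions of Lemma~\ref{opt-cond-SDP-RLT} with $(\hat x, \hat X) = (\hat x, \hat x \hat x^T)$. Since $\hat x \in F$, the rank-one lift $(\hat x, \hat x \hat x^T)$ belongs to $\cF_{RS}$ by the same elementary check as above, so Lemma~\ref{opt-cond-SDP-RLT} applies and yields that $(\hat x, \hat x \hat x^T)$ is an optimal solution of (RS). Therefore $\ell^*_{RS} = \frac{1}{2} \langle Q, \hat x \hat x^T \rangle + c^T \hat x = q(\hat x)$. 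Because $\hat x \in F$ is feasible for (BoxQP), \eqref{convex_rels} gives $\ell^* \leq q(\hat x) = \ell^*_{RS} \leq \ell^*$, whence $\ell^* = \ell^*_{RS}$ and $(Q,c) \in \cE_{RS}$; the same chain identifies $\hat x$ as an optimal solution of (BoxQP).

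I expect no serious obstacle here. The proposition is essentially a direct reformulation of Lemma~\ref{opt-cond-SDP-RLT}, and the underlying machinery, namely the strong duality result of Lemma~\ref{SDP-RLT-str-duality} and the observation that rank-one lifts $(x,xx^T)$ of feasible points $x \in F$ always lie in $\cF_{RS}$, is already in place. The only step that deserves an explicit line of justification is verifying this latter feasibility, which amounts to noting that McCormick inequalities degenerate to equalities on rank-one lifts and that $X - xx^T = 0 \succeq 0$ trivially.
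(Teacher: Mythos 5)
Your proposal is correct and follows essentially the same route as the paper: identify $(\hat x,\hat x\hat x^T)$ as an optimal solution of (RS) in the forward direction and invoke Lemma~\ref{opt-cond-SDP-RLT}, then reverse the sandwich $\ell^* \leq q(\hat x) = \ell^*_{RS} \leq \ell^*$ for the converse. The only cosmetic difference is that the paper obtains $\ell_{RS}(x^*) = q(x^*)$ by citing Corollary~\ref{implications}(ii), whereas you derive it directly from \eqref{conv_underest_rel_1} and \eqref{def_RS1}; both are valid.
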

\begin{proof}
Suppose that $(Q,c) \in \cE_{RS}$. Let $\hat x \in F$ be an optimal solution of (BoxQP). By Corollary~\ref{implications}(ii), we obtain $\ell^*_{RS} = \ell^* = q(\hat x) = \ell_{RS}(\hat x)$. Therefore, $\hat x$ is an optimal solution of (RS1) given by \eqref{def_RS1}. Let $(\hat x, \hat X) = (\hat x, \hat x \hat x^T) \in \cF_{RS}$. We obtain $\textstyle \frac{1}{2}\langle Q, \hat X \rangle + c^T \hat x = q(\hat x) = \ell^*_{RS}$, which implies that $(\hat x, \hat x \hat x^T)$ is an optimal solution of (RS). The claim follows from Lemma~\ref{opt-cond-SDP-RLT}. 

For the reverse implication, note that $(\hat x, \hat X) = (\hat x, \hat x \hat x^T)$ is an optimal solution of (RS) by Lemma~\ref{opt-cond-SDP-RLT}. By a similar argument and using \eqref{convex_rels}, we obtain $\ell^* \leq q(\hat x) = \ell^*_{RS} \leq \ell^*$, which implies that $\ell^*_{RS} = \ell^*$, or equivalently, that $(Q,c) \in \cE_{RS}$. 

The second assertion follows directly from the previous arguments.
\end{proof}

In the next section, by relying on Proposition~\ref{exact_SDP_RLT_char}, we propose two algorithms to construct instances of (BoxQP) with different exactness guarantees. 

\subsection{Construction of Instances with Exact SDP-RLT Relaxations}

In this section, we present an algorithm for constructing instances of (BoxQP) with an exact SDP-RLT relaxation. Similar to Algorithm~\ref{Alg1}, Algorithm~\ref{Alg3} is based on designating $\hat x \in F$ and constructing an appropriate dual feasible solution that satisfies optimality conditions together with $(\hat x,\hat x \hat x^T) \in \cF_{RS}$.

\begin{algorithm}
\begin{algorithmic}[1]
\Require $n$, $\hat x \in F$
\Ensure $(Q,c) \in \cE_{RS}$
\State $\mathbb{L} \gets \mathbb{L}(\hat x)$, $\mathbb{B} \gets \mathbb{B}(\hat x)$, $\mathbb{U} \gets \mathbb{U}(\hat x)$
\State Choose an arbitrary $\hat u \in \R^n$ such that $\hat u_\mathbb{U} \geq 0$ and $\hat u_{\mathbb{L} \cup \mathbb{B}} = 0$.
\State Choose an arbitrary $\hat v \in \R^n$ such that $\hat v_\mathbb{L} \geq 0$ and $\hat v_{\mathbb{B} \cup \mathbb{U}} = 0$.
\State Choose an arbitrary $\hat W \in \cS^n$ such that $\hat W_{\mathbb{B}\cup \mathbb{L},\mathbb{B}\cup\mathbb{L}} = 0$ and $\hat W_{ij} \geq 0$ otherwise.
\State Choose an arbitrary $\hat Y \in \R^{n \times n}$ such that $\hat Y_{\mathbb{L}\mathbb{B}} = 0,~\hat Y_{\mathbb{L}\mathbb{U}} = 0,~\hat Y_{\mathbb{B}\mathbb{B}} = 0,~\hat Y_{\mathbb{B}\mathbb{U}} = 0$ and $\hat Y_{ij} \geq 0$ otherwise.
\State Choose an arbitrary $\hat Z \in \cS^n$ such that $\hat Z_{\mathbb{B} \cup \mathbb{U},\mathbb{B} \cup \mathbb{U}} = 0$
and $\hat Z_{ij} \geq 0$ otherwise.
\State Choose an arbitrary $\hat H \in \cS^n$ such that $\hat H \succeq 0$. 
\State $\hat h \gets - \hat H \hat x$, $\hat \beta \gets - \hat h^T \hat x$
\State $Q \gets \hat W - \hat Y - \hat Y^T + \hat Z + \hat H$
\State $c \gets -\hat u + \hat v - \hat W e + \hat Y^T e + \hat h$
\end{algorithmic}
\caption{(BoxQP) Instance with an Exact SDP-RLT Relaxation}
\label{Alg3}
\end{algorithm}

The next proposition establishes the correctness of Algorithm~\ref{Alg3}.

\begin{proposition} \label{Alg3-correct}
Algorithm~\ref{Alg3} returns $(Q,c) \in \cE_{RS}$, where $\cE_{RS}$ is defined as in \eqref{exact_RLT_SDP_set}. Conversely, any $(Q,c) \in \cE_{RS}$ can be generated by Algorithm~\ref{Alg3} with appropriate choices of $\hat x \in F$ and $(\hat u,\hat v,\hat W,\hat Y,\hat Z,\hat \beta,\hat h,\hat H) \in \R^n \times \R^n \times \cS^n \times \R^{n \times n} \times \cS^n \times \R \times \R^n \times \cS^n$.    
\end{proposition}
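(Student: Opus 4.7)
The plan is to prove both directions by invoking Proposition~\ref{exact_SDP_RLT_char}, so the task reduces to checking that Algorithm~\ref{Alg3} produces exactly a primal-dual pair satisfying the optimality conditions of Lemma~\ref{opt-cond-SDP-RLT} with $(\hat x,\hat X)=(\hat x,\hat x \hat x^T)$, and that conversely every such pair has the structural form imposed by the algorithm.

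For the forward direction, I first note that the primal candidate $(\hat x,\hat x\hat x^T)$ lies in $\cF_{RS}$ by Lemma~\ref{feas_cond} (take $M=0\in\cM_{RS}(\hat x)$, which is admissible since the required bounds hold strictly for indices in $\mathbb{B}$). Conditions \eqref{RLTSDPcond1} and \eqref{RLTSDPcond2} are enforced by Steps~9 and~10, and the sign/nonnegativity conditions \eqref{RLTcond6}--\eqref{RLTcond10} are enforced by Steps~2--6. For \eqref{RLTSDPcond3}, the assignments $\hat h = -\hat H\hat x$ and $\hat \beta=-\hat h^T\hat x=\hat x^T\hat H\hat x$ together with $\hat H\succeq 0$ give the factorization
\[
\begin{bmatrix} \hat\beta & \hat h^T\\ \hat h & \hat H\end{bmatrix}=\begin{bmatrix} -\hat x^T\\ I\end{bmatrix}\hat H\begin{bmatrix} -\hat x & I\end{bmatrix}\succeq 0,
\]
and the same identities make \eqref{RLTSDPcond4} a direct computation (the inner product with the rank-one matrix $\begin{bmatrix} 1\\ \hat x\end{bmatrix}\begin{bmatrix} 1 & \hat x^T\end{bmatrix}$ equals $\hat\beta + 2\hat x^T\hat h + \hat x^T\hat H\hat x=0$). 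The RLT complementary slackness conditions \eqref{RLTcond1}--\eqref{RLTcond5} are checked by inspecting the supports: the matrix $\hat X -\hat x e^T -e\hat x^T +ee^T$ has entries $(1-\hat x_i)(1-\hat x_j)$, which are nonzero only on $(\mathbb{L}\cup\mathbb{B})\times(\mathbb{L}\cup\mathbb{B})$ where $\hat W$ vanishes by Step~4; the matrix $e\hat x^T-\hat X$ has entries $(1-\hat x_i)\hat x_j$, supported on $(\mathbb{L}\cup\mathbb{B})\times(\mathbb{B}\cup\mathbb{U})$ where $\hat Y$ vanishes by Step~5; and $\hat X=\hat x\hat x^T$ is supported on $(\mathbb{B}\cup\mathbb{U})\times(\mathbb{B}\cup\mathbb{U})$ where $\hat Z$ vanishes by Step~6. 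The remaining linear complementarity \eqref{RLTcond1}, \eqref{RLTcond2} follows similarly from the vanishing of $\hat u$ on $\mathbb{L}\cup\mathbb{B}$ and $\hat v$ on $\mathbb{B}\cup\mathbb{U}$.

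For the converse, suppose $(Q,c)\in\cE_{RS}$. By Proposition~\ref{exact_SDP_RLT_char}, pick $\hat x\in F$ and a dual multiplier tuple satisfying the conditions of Lemma~\ref{opt-cond-SDP-RLT} with $(\hat x,\hat x\hat x^T)$. The same support arguments read in reverse: from \eqref{RLTcond1}--\eqref{RLTcond5} combined with nonnegativity, each product $\hat u_j(1-\hat x_j)$, $\hat v_j\hat x_j$, $\hat W_{ij}(1-\hat x_i)(1-\hat x_j)$, $\hat Y_{ij}(1-\hat x_i)\hat x_j$, and $\hat Z_{ij}\hat x_i\hat x_j$ vanishes termwise, which forces exactly the index-set zero patterns prescribed in Steps~2--6 (with the labels $\mathbb{L},\mathbb{B},\mathbb{U}$ defined from $\hat x$ as in Step~1). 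For the SDP block, the rank-one matrix $\begin{bmatrix} 1\\ \hat x\end{bmatrix}\begin{bmatrix} 1 & \hat x^T\end{bmatrix}$ is PSD, and complementary slackness \eqref{RLTSDPcond4} with \eqref{RLTSDPcond3} forces $\begin{bmatrix} \hat\beta & \hat h^T\\ \hat h & \hat H\end{bmatrix}\begin{bmatrix} 1\\ \hat x\end{bmatrix}=0$, yielding $\hat h=-\hat H\hat x$ and $\hat\beta=-\hat h^T\hat x$ as in Step~8. The PSD condition on the full block then compresses to $\hat H\succeq 0$ via the Schur complement, matching Step~7. Finally \eqref{RLTSDPcond1} and \eqref{RLTSDPcond2} are the definitions of $Q$ and $c$ in Steps~9 and~10, so this tuple is a valid input of Algorithm~\ref{Alg3} producing the given $(Q,c)$.

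The only nontrivial step is handling the semidefinite block: one must argue that complementary slackness between two PSD matrices, one of which is $\begin{bmatrix}1\\ \hat x\end{bmatrix}\begin{bmatrix}1 & \hat x^T\end{bmatrix}$, forces the other matrix to annihilate the direction $\begin{bmatrix}1\\ \hat x\end{bmatrix}$, which cleanly pins down $\hat h$ and $\hat \beta$ in terms of $\hat H$ as the algorithm demands. Everything else is bookkeeping across the index partition $(\mathbb{L},\mathbb{B},\mathbb{U})$ determined by $\hat x$.
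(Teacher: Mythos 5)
Your proof is correct and follows essentially the same route as the paper's: reduce both directions to Proposition~\ref{exact_SDP_RLT_char} via the optimality conditions of Lemma~\ref{opt-cond-SDP-RLT}, with the key step being the factorization of the dual semidefinite block showing $\bigl[\begin{smallmatrix}\hat\beta & \hat h^T\\ \hat h & \hat H\end{smallmatrix}\bigr]\succeq 0$ and its complementarity with $\bigl[\begin{smallmatrix}1 & \hat x^T\\ \hat x & \hat x\hat x^T\end{smallmatrix}\bigr]$. You spell out the support/complementary-slackness bookkeeping and the converse direction (where $AB=0$ pins down $\hat h=-\hat H\hat x$ and $\hat\beta=-\hat h^T\hat x$) in more detail than the paper, which simply asserts these verifications, but the argument is the same.
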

\begin{proof}
Since $\hat H \succeq 0$, it follows from Steps 8 and 9 of Algorithm~\ref{Alg3} that 
\begin{equation} \label{psd_rel}
\begin{bmatrix} \hat \beta & \hat h^T \\ \hat h & \hat H \end{bmatrix} = \begin{bmatrix} \hat x^T \\ -I \end{bmatrix} \hat H \begin{bmatrix} \hat x^T \\ -I \end{bmatrix}^T \succeq 0.
\end{equation}
Therefore, $(\hat u,\hat v,\hat W,\hat Y,\hat Z,\hat \beta,\hat h,\hat H) \in \R^n \times \R^n \times \cS^n \times \R^{n \times n} \times \cS^n \times \R \times \R^n \times \cS^n$ is a feasible solution of (RS-D). Furthermore, the identity in \eqref{psd_rel} also implies that
\[
\left\langle \begin{bmatrix} 1 & \hat x^T \\ \hat x & \hat x \hat x^T \end{bmatrix} , \begin{bmatrix} \hat \beta & \hat h^T \\ \hat h & \hat H \end{bmatrix} \right\rangle = \begin{bmatrix} 1 \\ \hat x \end{bmatrix}^T \begin{bmatrix} \hat \beta & \hat h^T \\ \hat h & \hat H \end{bmatrix} \begin{bmatrix} 1 \\ \hat x \end{bmatrix} = 0.
\]
It is easy to verify that the conditions of Lemma~\ref{opt-cond-SDP-RLT} are satisfied with $(\hat x, \hat X) = (\hat x,\hat x \hat x^T) \in \cF_{RS}$. Both assertions follow from Proposition~\ref{exact_SDP_RLT_char}.
\end{proof}

By Proposition~\ref{Alg3-correct}, we conclude that $\cE_{RS}$ is given by the union of infinitely many convex cones each of which can be represented by semidefinite and linear constraints.

Similar to Algorithm~\ref{Alg1}, we remark that Algorithm~\ref{Alg3} can be utilized to generate an instance of (BoxQP) with an exact SDP-RLT relaxation such that any designated feasible solution $\hat x \in F$ is an optimal solution of (BoxQP).

\subsection{Construction of Instances with Exact SDP-RLT and Inexact RLT Relaxations}

Recall that the SDP-RLT relaxation of any instance of (BoxQP) is at least as tight as the RLT relaxation.  In this section, we present another algorithm for constructing instances of (BoxQP) that admit an exact SDP-RLT relaxation but an inexact RLT relaxation, i.e., an instance in $\cE_{RS} \backslash \cE_R$ (cf.~\eqref{cERsubsCERS}). In particular, this algorithm can be used to construct instances of (BoxQP) such that the SDP-RLT relaxation not only strengthens the RLT relaxation, but also yields an exact lower bound. 

Note that Algorithm~\ref{Alg3} is capable of constructing all instances of (BoxQP) in the set $\cE_{RS}$. On the other hand, if one chooses $\hat x \in V$ and $\hat H = 0$ in Algorithm~\ref{Alg3}, which, in turn, would imply that $\hat h = 0$ and $\hat \beta = 0$, it is easy to verify that the choices of the remaining parameters satisfy the conditions of Algorithm~\ref{Alg1}, which implies that the resulting instance would already have an exact RLT relaxation, i.e., $(Q,c) \in \cE_R$. 

In this section, we present Algorithm~\ref{Alg4}, where we use a similar idea as in Algorithm~\ref{Alg2}, i.e., we aim to construct an instance of (BoxQP) such that $(\hat x,\hat x \hat x^T)$ is the unique optimal solution of (RS), where $\hat x \in F \backslash V$.

\begin{algorithm}
\begin{algorithmic}[1]
\Require $n$, $\hat x \in F \backslash V$
\Ensure $(Q,c) \in \cE_{RS} \backslash \cE_R$
\State $\mathbb{L} \gets \mathbb{L}(\hat x)$, $\mathbb{B} \gets \mathbb{B}(\hat x)$, $\mathbb{U} \gets \mathbb{U}(\hat x)$
\State Choose an arbitrary $\hat u \in \R^n$ such that $\hat u_\mathbb{U} \geq  0$ and $\hat u_{\mathbb{L} \cup \mathbb{B}} = 0$.
\State Choose an arbitrary $\hat v \in \R^n$ such that $\hat v_\mathbb{L} \geq 0$ and $\hat v_{\mathbb{B} \cup \mathbb{U}} = 0$.
\State Choose an arbitrary $\hat W \in \cS^n$ such that $\hat W_{\mathbb{B}\cup \mathbb{L},\mathbb{B}\cup\mathbb{L}} = 0$ and $\hat W_{ij} \geq 0$ otherwise.
\State Choose an arbitrary $\hat Y \in \R^{n \times n}$ such that $\hat Y_{\mathbb{L}\mathbb{B}} = 0,~\hat Y_{\mathbb{L}\mathbb{U}} = 0,~\hat Y_{\mathbb{B}\mathbb{B}} = 0,~\hat Y_{\mathbb{B}\mathbb{U}} = 0$ and $\hat Y_{ij} \geq 0$ otherwise.
\State Choose an arbitrary $\hat Z \in \cS^n$ such that $\hat Z_{\mathbb{B} \cup \mathbb{U},\mathbb{B} \cup \mathbb{U}} = 0$
and $\hat Z_{ij} \geq 0$ otherwise.
\State Choose an arbitrary $\hat H \in \cS^n$ such that $\hat H \succ 0$. 
\State $\hat h \gets - \hat H \hat x$, $\hat \beta \gets - \hat h^T \hat x$
\State $Q \gets \hat W - \hat Y - \hat Y^T + \hat Z + \hat H$
\State $c \gets -\hat u + \hat v - \hat W e + \hat Y^T e + \hat h$
\end{algorithmic}
\caption{(BoxQP) Instance with an Exact SDP-RLT Relaxation and an Inexact RLT Relaxation}
\label{Alg4}
\end{algorithm}

Note that Algorithm~\ref{Alg3} and Algorithm~\ref{Alg4} are almost identical, except that, in Step 7, we require that $\hat H \succ 0$ in Algorithm~\ref{Alg4} as opposed to $\hat H \succeq 0$ in Algorithm~\ref{Alg3}. The next result establishes that the output from Algorithm~\ref{Alg4} is an instance of (BoxQP) with an exact SDP-RLT but inexact RLT relaxation.

\begin{proposition} \label{Alg4correct}
Algorithm~\ref{Alg4} returns $(Q,c) \in \cE_{RS} \backslash \cE_R$, where $\cE_R$ and $\cE_{RS}$ are defined as in \eqref{exact_RLT_set} and \eqref{exact_RLT_SDP_set}, respectively.  
\end{proposition}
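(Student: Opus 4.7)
The plan is to decompose the verification into two parts: showing $(Q,c) \in \cE_{RS}$ and then showing $(Q,c) \notin \cE_R$. Algorithm~\ref{Alg4} differs from Algorithm~\ref{Alg3} only in Step~7, where the choice $\hat H \succ 0$ replaces $\hat H \succeq 0$; since positive definiteness implies positive semidefiniteness, the entire argument of Proposition~\ref{Alg3-correct} applies verbatim. This yields $(Q,c) \in \cE_{RS}$, and by Proposition~\ref{exact_SDP_RLT_char}, it also guarantees that $\hat x$ is an optimal solution of (BoxQP), i.e., $q(\hat x) = \ell^*$.

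For the second part, I would apply Lemma~\ref{exact_underests}(i) to conclude that $\ell_R(\hat x) < q(\hat x)$. The key computation is to isolate the principal submatrix $Q_{\mathbb{B}\mathbb{B}}$ from the identity $Q = \hat W - \hat Y - \hat Y^T + \hat Z + \hat H$. The structural zero patterns imposed in Steps~4--6 give $\hat W_{\mathbb{B}\mathbb{B}} = 0$ (since $\mathbb{B} \subseteq \mathbb{B}\cup\mathbb{L}$), $\hat Y_{\mathbb{B}\mathbb{B}} = 0$, and $\hat Z_{\mathbb{B}\mathbb{B}} = 0$ (since $\mathbb{B} \subseteq \mathbb{B}\cup\mathbb{U}$), so $Q_{\mathbb{B}\mathbb{B}} = \hat H_{\mathbb{B}\mathbb{B}}$. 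Since $\hat H \succ 0$, its principal submatrix $\hat H_{\mathbb{B}\mathbb{B}}$ is also positive definite, and in particular nonzero.

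The hypothesis $\hat x \in F \setminus V$ ensures $\mathbb{B} \neq \emptyset$, so $\hat x$ is not a vertex of $F$. Thus both conditions for $\ell_R(\hat x) = q(\hat x)$ in Lemma~\ref{exact_underests}(i) fail, forcing $\ell_R(\hat x) \neq q(\hat x)$. Combined with $\ell_R(\hat x) \leq q(\hat x)$, which follows from \eqref{conv_underest_rel_1}, this yields the strict inequality $\ell_R(\hat x) < q(\hat x)$. Since $\ell^*_R = \min_{x \in F} \ell_R(x) \leq \ell_R(\hat x)$ by \eqref{def_R1} and $q(\hat x) = \ell^*$, I conclude $\ell^*_R < \ell^*$, and therefore $(Q,c) \notin \cE_R$.

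The main obstacle is really only bookkeeping: verifying that the three block-zero constraints on $\hat W$, $\hat Y$, and $\hat Z$ collectively annihilate every contribution to $Q_{\mathbb{B}\mathbb{B}}$ except $\hat H_{\mathbb{B}\mathbb{B}}$. Once this is in place, the strict definiteness of $\hat H$ activates the inexactness criterion of Lemma~\ref{exact_underests}(i) at $\hat x$, and the desired separation $\ell^*_R < \ell^* = \ell^*_{RS}$ follows immediately.
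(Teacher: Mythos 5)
Your proof is correct, and the second half takes a genuinely different route from the paper. For showing $(Q,c) \in \cE_{RS}$ you follow the paper exactly (inherit Proposition~\ref{Alg3-correct} and Proposition~\ref{exact_SDP_RLT_char}, which also give $q(\hat x) = \ell^*$). For showing $(Q,c) \notin \cE_R$, the paper instead proves that $(\hat x, \hat x \hat x^T)$ is the \emph{unique} optimal solution of (RS) — via the complementarity condition \eqref{RLTSDPcond4} and $\hat H \succ 0$, which force $\tilde x = \hat x$ and $\tilde X = \hat x \hat x^T$ for any optimal $(\tilde x,\tilde X)$ — concludes that $\hat x \in F \setminus V$ is the unique optimal solution of (BoxQP), and then invokes Corollary~\ref{vertex-optimality}. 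You bypass the uniqueness argument entirely: the block bookkeeping $\hat W_{\mathbb{B}\mathbb{B}} = \hat Y_{\mathbb{B}\mathbb{B}} = \hat Z_{\mathbb{B}\mathbb{B}} = 0$ is right (Steps 4--6 do annihilate everything but $\hat H_{\mathbb{B}\mathbb{B}}$), so $Q_{\mathbb{B}\mathbb{B}} = \hat H_{\mathbb{B}\mathbb{B}} \succ 0$ with $\mathbb{B} \neq \emptyset$, and Lemma~\ref{exact_underests}(i) together with \eqref{conv_underest_rel_1} and \eqref{def_R1} gives $\ell^*_R \leq \ell_R(\hat x) < q(\hat x) = \ell^*$ directly. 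Your argument is shorter and more elementary, needing only the underestimator gap at the single point $\hat x$; the paper's argument is longer but yields the stronger structural byproduct that $\hat x$ is the unique global minimizer of the constructed instance, a fact it uses later (e.g., in Example~\ref{Ex4}).
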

\begin{proof}
By the observation preceding the statement, it follows from Propositions~\ref{exact_SDP_RLT_char} and \ref{Alg3-correct} that $(Q,c) \in \cE_{RS}$ and that $(\hat x,\hat x \hat x^T)$ is an optimal solution of (RS). First, we show that this is the unique optimal solution of (RS). Suppose, for a contradiction, that there exists another optimal solution $(\tilde x, \tilde X) \in \cF_{RS}$. Note that, for any $A \succeq 0$ and $B \succeq 0$, $\langle A, B \rangle = 0$ holds if and only if $AB = 0$. Therefore, it follows from \eqref{RLTSDPcond4} that $\hat h - \hat H \tilde x = 0$. Since $\hat H \succ 0$, we obtain $\tilde x = \hat x$ by Step 8. By \eqref{psd_rel}, we obtain
\[
\left\langle \begin{bmatrix} 1 & \hat x^T \\ \hat x & \tilde X \end{bmatrix} , \begin{bmatrix} \hat \beta & \hat h^T \\ \hat h & \hat H \end{bmatrix} \right\rangle = \left\langle \begin{bmatrix} 1 & \hat x^T \\ \hat x & \tilde X \end{bmatrix} , \begin{bmatrix} \hat x^T \\ -I \end{bmatrix} \hat H \begin{bmatrix} \hat x^T \\ -I \end{bmatrix}^T \right\rangle = \langle \hat H, \hat X - \hat x \hat x^T \rangle = 0.
\]
Since $\hat H \succ 0$ by Step 7 and $ \hat X - \hat x \hat x^T \succeq 0$, it follows that $\tilde X = \hat x \hat x^T$, which contradicts our assumption. It follows that $(\hat x,\hat x \hat x^T)$ is the unique optimal solution of (RS), or equivalently, that $\hat x$ is the unique optimal solution of (RS1) given by \eqref{def_RS1}. By Proposition~\ref{exact_SDP_RLT_char} and \eqref{conv_underest_rel_1}, we conclude that $(Q,c) \in \cE_{RS}$ and that $\hat x \in F \backslash V$ is the unique optimal solution of (BoxQP). By Corollary~\ref{vertex-optimality}, $(Q,c) \not \in \cE_{R}$, which completes the proof.
\end{proof}

Algorithm~\ref{Alg4} can be used to construct an instance in the set $\cE_R \backslash \cE_{RS}$. In particular, it is worth noticing that the family of instances used in the proof of Lemma~\ref{SDP-RLT-opt-char} can be constructed by Algorithm~\ref{Alg4} by simply choosing $(\hat u, \hat v, \hat W, \hat Y, \hat Z) = (0,0,0,0,0)$. In particular, similar to Algorithm~\ref{Alg2}, it is worth noting that any instance constructed by Algorithm~\ref{Alg4} necessarily satisfies $Q_{kk} > 0$ for each $k \in \mathbb{B}$. On the other hand, recall that the SDP-RLT relaxation is always exact for $n \leq 2$. Therefore, similar to our discussion about Algorithm~\ref{Alg2}, we conclude that the set of instances that can be constructed by Algorithm~\ref{Alg4} may not necessarily encompass all instances in $\cE_R \backslash \cE_{RS}$. 

\section{Examples and Discussion} \label{Sec6}

In this section, we present numerical examples generated by each of the four algorithms given by Algorithms~\ref{Alg1}--\ref{Alg4}. 
We then close the section with a brief discussion.

\subsection{Examples}

In this section, we present instances of (BoxQP) generated by each of the four algorithms given by Algorithms~\ref{Alg1}--\ref{Alg4}. Our main goal is to demonstrate that our algorithms are capable of generating nontrivial instances of (BoxQP) with predetermined exactness or inexactness guarantees. 

\begin{example} \label{Ex1}
Let $n = 2$, $\mathbb{L} = \{1\}$, and $\mathbb{U} = \{2\}$ in Algorithm~\ref{Alg1}. Then, by Steps 2--6, we have
\[
\hat u = \begin{bmatrix} 0 \\ \alpha \end{bmatrix}, \quad \hat v = \begin{bmatrix} \beta \\ 0 \end{bmatrix}, \quad \hat W = \begin{bmatrix} 0 & \gamma \\ \gamma & \delta \end{bmatrix}, \quad \hat Y = \begin{bmatrix} \theta & 0 \\ \mu & \rho \end{bmatrix}, \quad \hat Z = \begin{bmatrix} \sigma & \eta \\ \eta & 0 \end{bmatrix},
\]
where each of $\alpha, \beta, \gamma, \delta, \theta, \mu, \rho, \sigma, \eta$ is a nonnegative real number. By Steps 7 and 8, we obtain 
\[
Q = \begin{bmatrix} \sigma - 2 \theta & \gamma + \eta - \mu \\ \gamma + \eta - \mu & \delta - 2 \rho \end{bmatrix}, \quad c = \begin{bmatrix} \beta + \theta + \mu - \gamma \\ \rho - \alpha - \gamma -\delta \end{bmatrix}.
\]
For instance, if we choose $\theta = \rho = \gamma = \eta = \mu = 0$, $\sigma + \delta > 0$, $\alpha \geq 0$, and $\beta \geq 0$, then $Q \succeq 0$, which implies that $q(x)$ is a convex function. If we choose $\sigma = \delta = \gamma = \eta = \mu = 0$, $\theta + \rho > 0$, $\alpha \geq 0$, and $\beta \geq 0$, then $-Q \succeq 0$, which implies that $q(x)$ is a concave function. Finally, if we choose $\theta = \delta = \gamma = \eta = \mu = 0$, $\sigma > 0$, $\rho > 0$, $\alpha \geq 0$, and $\beta \geq 0$, then $Q$ is indefinite, which implies that $q(x)$ is an indefinite quadratic function. For each of the three choices, the RLT relaxation is exact and $\hat x = \begin{bmatrix} 0 & 1 \end{bmatrix}^T$ is an optimal solution of the resulting instance of (BoxQP). Note that by setting $\hat H = 0$ and $\hat h = 0$ in Algorithm~\ref{Alg3}, the same observations carry over.
\end{example}

\begin{example} \label{Ex2}
Let $n = 3$, $\mathbb{L} = \{1\}$, $\mathbb{B} = \{2\}$, and $\mathbb{U} = \{3\}$ in Algorithm~\ref{Alg2}. Then, by Steps 3--8, we have $k = 2$ and 
\[
\hat u = \begin{bmatrix} 0 \\ 0 \\ \alpha \end{bmatrix}, \quad \hat v = \begin{bmatrix} \beta \\ 0 \\ 0 \end{bmatrix}, \quad \hat W = \begin{bmatrix} 0 & 0 & \gamma \\ 0 & \delta & \theta \\ \gamma & \theta & \mu \end{bmatrix}, \quad \hat Y = \begin{bmatrix} \rho & 0 & 0 \\ \sigma & 0 & 0 \\ \eta & \epsilon & \zeta \end{bmatrix}, \quad \hat Z = \begin{bmatrix} \kappa & \lambda & \nu \\ \lambda & \tau & 0 \\ \nu & 0 & 0 \end{bmatrix},
\]
where each of $\alpha, \beta, \gamma, \theta, \mu, \rho, \sigma, \eta, \epsilon, \zeta, \kappa, \lambda, \nu$ is a nonnegative real number, $\delta > 0$ and $\tau > 0$. By Steps 9 and 10, we obtain
\[
Q = \begin{bmatrix} \kappa - 2 \rho & \lambda - \sigma & \gamma + \nu - \eta \\ \lambda - \sigma & \delta + \tau & \theta - \epsilon\\ \gamma + \nu - \eta & \theta - \epsilon & \mu - 2 \zeta \end{bmatrix}, \quad c = \begin{bmatrix} \beta + \rho + \sigma + \eta - \gamma \\ \epsilon - \delta - \theta \\ \zeta - \gamma - \theta - \mu - \alpha \end{bmatrix}.
\]
If we set each of the parameters $\alpha, \beta, \gamma, \theta, \mu, \rho, \sigma, \eta, \epsilon, \zeta, \kappa, \lambda, \nu$ to zero, and choose any $\delta > 0$ and $\tau > 0$, then $Q \succeq 0$, which implies that $q(x)$ is a convex function. On the other hand, if we set each of the parameters $\alpha, \beta, \gamma, \theta, \mu, \sigma, \eta, \epsilon, \kappa, \lambda, \nu$ to zero, and choose any $\delta > 0$, $\tau > 0$ and $\rho + \zeta > 0$, then $Q$ is indefinite, which implies that $q(x)$ is an indefinite quadratic function. For each of the two choices, the RLT relaxation is inexact. Recall that an instance generated by Algorithm~\ref{Alg2} cannot have a concave objective function since $Q_{kk} = \hat W_{kk} + \hat Z_{kk} > 0$.  
\end{example}

\begin{example} \label{Ex4}
Let $n = 3$, $\mathbb{L} = \{1\}$, $\mathbb{B} = \{2\}$, and $\mathbb{U} = \{3\}$ in Algorithm~\ref{Alg4}. Then, by Steps 2--6, we have 
\[
\hat u = \begin{bmatrix} 0 \\ 0 \\ \alpha \end{bmatrix}, \quad \hat v = \begin{bmatrix} \beta \\ 0 \\ 0 \end{bmatrix}, \quad \hat W = \begin{bmatrix} 0 & 0 & \gamma \\ 0 & 0 & \theta \\ \gamma & \theta & \mu \end{bmatrix}, \quad \hat Y = \begin{bmatrix} \rho & 0 & 0 \\ \sigma & 0 & 0 \\ \eta & \epsilon & \zeta \end{bmatrix}, \quad \hat Z = \begin{bmatrix} \kappa & \lambda & \nu \\ \lambda & 0 & 0 \\ \nu & 0 & 0 \end{bmatrix},
\]
where each of $\alpha, \beta, \gamma, \theta, \mu, \rho, \sigma, \eta, \epsilon, \zeta, \kappa, \lambda, \nu$ is a nonnegative real number. By Step 7, $\hat H \succ 0$ is arbitrarily chosen. By Step 8, we have $\hat h = - \hat H \hat x$ and $\hat \beta = - \hat h^T \hat x$. By Steps 9 and 10, we therefore obtain
\[
Q = \begin{bmatrix} \kappa - 2 \rho & \lambda - \sigma & \gamma + \nu - \eta \\ \lambda - \sigma & 0 & \theta - \epsilon\\ \gamma + \nu - \eta & \theta - \epsilon & \mu - 2 \zeta \end{bmatrix} + \hat H, \quad c = \begin{bmatrix} \beta + \rho + \sigma + \eta - \gamma \\ \epsilon - \theta \\ \zeta - \gamma - \theta - \mu - \alpha \end{bmatrix} + \hat h.
\]
If we set each of the parameters $\alpha, \beta, \gamma, \theta, \mu, \rho, \sigma, \eta, \epsilon, \zeta, \kappa, \lambda, \nu$ to zero, then $Q = \hat H \succ 0$, which implies that $q(x)$ is a strictly convex function. On the other hand, if we set each of the parameters $\alpha, \beta, \gamma, \theta, \mu, \sigma, \eta, \epsilon, \kappa, \lambda, \nu$ to zero, and choose a sufficiently large $\rho + \zeta > 0$, then $Q$ is indefinite, which implies that $q(x)$ is an indefinite quadratic function. For each of the two choices, $\hat x$ is the unique optimal solution of the resulting instance of (BoxQP) and the SDP-RLT relaxation is exact whereas the RLT relaxation is inexact. Recall that an instance generated by Algorithm~\ref{Alg4} cannot have a concave objective function since $Q_{kk} = \hat H_{kk} > 0$. Indeed, such an instance of (BoxQP) necessarily has an optimal solution at a vertex whereas Algorithm~\ref{Alg4} ensures that the resulting instance of (BoxQP) has a unique solution $\hat x \in F \backslash V$. 
\end{example}

\subsection{Discussion}

We close this section with a discussion of the four algorithms given by Algorithms~\ref{Alg1}--\ref{Alg4}. Note that all instances of (BoxQP) can be divided into the following four sets:
\begin{eqnarray*} 
\cE_1 & = & \{(Q,c) \in \cS^n \times \R^n: \ell^*_R = \ell^*_{RS} = \ell^*\}, \\
\cE_2 & = & \{(Q,c) \in \cS^n \times \R^n: \ell^*_R <  \ell^*_{RS} = \ell^*\}, \\
\cE_3 & = & \{(Q,c) \in \cS^n \times \R^n: \ell^*_R =  \ell^*_{RS} < \ell^*\}, \\
\cE_4 & = & \{(Q,c) \in \cS^n \times \R^n: \ell^*_R <  \ell^*_{RS} < \ell^*\}.
\end{eqnarray*}

We clearly have $\cE_1 = \cE_R$, and any such instance can be constructed by Algorithm~\ref{Alg1}. On the other hand, Algorithm~\ref{Alg2} returns an instance in $\cE_2 \cup \cE_3 \cup \cE_4$. Any instance in $\cE_1 \cup \cE_2$ can be constructed by Algorithm~\ref{Alg3}. Finally, Algorithm~\ref{Alg4} outputs an instance in the set $\cE_2 = \cE_{RS} \backslash \cE_R$. 

Note that one can generate a specific instance of (BoxQP) with an inexact SDP-RLT relaxation by extending the example in  Section~\ref{const_inexact_rlt}~\cite{anstreicher2010computable}. Let $n = 2k + 1 \geq 3$ and consider the instance $(Q,c) \in \cS^n \times \R^n$ given by $Q = \textstyle\frac{1}{n} ee^T - I$, where $I \in \cS^n$ denotes the identity matrix, and $c = 0$. Since $Q$ is negative semidefinite, the optimal solution of (BoxQP) is attained at one of the vertices. It is easy to verify that any vertex $v \in F$ with $k$ (or $k + 1$) components equal to 1 and the remaining ones equal to zero is an optimal solution, which implies that $\ell^* = \textstyle\frac{1}{2}\left(\textstyle\frac{k^2}{n} - k\right)$. Let $\hat x = \textstyle\frac{1}{2} e$ and 
\[
\hat X = \hat x \hat x^T + \hat M = \frac{1}{4} ee^T + \frac{1}{4(n-1)} \left(n I - ee^T \right) = \frac{1}{4} \left(1 + \frac{1}{n-1}\right) I + \frac{1}{4}\left(1 - \frac{1}{n-1}\right) ee^T.
\]

It is easy to verify that $(\hat x, \hat X) \in \cF_{RS}$. Therefore, 
\[
\ell^*_{RS} \leq \frac{1}{2} \langle Q, \hat X \rangle + c^T \hat x = -\frac{n}{8}.
\]
Using $n = 2k + 1$, we conclude that $\ell^*_{RS} < \ell^*$, i.e., the SDP-RLT relaxation is inexact. Finally, this example can be extended to an even dimension $n = 2k \geq 4$ by simply constructing the same example corresponding to $n = 2k - 1$ and then adding a component of zero to each of $\hat x$ and $c$, and adding a column and row of zeros to each of $Q$ and $\hat X$. 

An interesting question is whether an algorithm can be developed for generating more general instances with inexact SDP-RLT relaxations, i.e., the set of instances given by $\cE_3 \cup \cE_4$. One possible approach is to use a similar idea as in Algorithms~\ref{Alg2} and~\ref{Alg4}, i.e., designate an optimal solution $(\hat x,\hat X) \in \cF_{RS}$, which is not in the form of $(v,vv^T)$ for any vertex $v \in F$, and identify the conditions on the other parameters so as to guarantee that $(\hat x,\hat X)$ is the unique optimal solution of the SDP-RLT relaxation (RS). Note that Lemma~\ref{opt-cond-SDP-RLT} can be used to easily construct an instance of (BoxQP) such that any feasible solution $(\hat x,\hat X) \in \cF_{RS}$ is an optimal solution of (RS). In particular, the condition \eqref{RLTSDPcond4} can be satisfied by simply choosing an arbitrary matrix $B \in\cS^k$ such that $B \succeq 0$, and by defining 
\[
\begin{bmatrix} \hat \beta & \hat h^T \\ \hat h & \hat H \end{bmatrix} = P B P^T,
\]
where $P \in \R^{(n+1) \times k}$ is a matrix whose columns form a basis for the nullspace of the matrix 
\[
\begin{bmatrix} 1 & \hat x^T \\ \hat x & \hat X \end{bmatrix}.
\]
For instance, the columns of $P$ can be chosen to be the set of eigenvectors corresponding to zero eigenvalues. However, this procedure does not necessarily guarantee that $(\hat x,\hat X) \in \cF_{RS}$ is the unique optimal solution of (RS). Therefore, a characterization of the extreme points and the facial structure of $\cF_{RS}$ may shed light on the algorithmic construction of such instances. We intend to investigate this direction in the near future. 

\section{Concluding Remarks} \label{Sec7}

In this paper, we considered RLT and SDP-RLT relaxations of quadratic programs with box constraints. We presented algebraic descriptions of instances of (BoxQP) that admit exact RLT relaxations as well as those that admit exact SDP-RLT relaxations. Using these descriptions, we proposed four algorithms for efficiently constructing an instance of (BoxQP) with predetermined exactness or inexactness guarantees. In particular, we remark that Algorithms~\ref{Alg1}, \ref{Alg3}, and \ref{Alg4} can be used to construct an instance of (BoxQP) with a known optimal solution, which may be of independent interest for computational purposes.

In the near future, we intend to investigate the facial structure of the feasible region of the SDP-RLT relaxation and exploit it to develop algorithms for generating instances of (BoxQP) with an inexact SDP-RLT relaxation. 

Another interesting direction is the computational complexity of determining whether, for a given instance of (BoxQP), the RLT or the SDP-RLT relaxation is exact. Our algebraic descriptions do not yield an efficient procedure for this problem. An efficient recognition algorithm may have significant implications for extending the reach of global solvers for (BoxQP).

\bibliographystyle{abbrv}
\bibliography{sn-bibliography}

\end{document}